\newtheorem{thrm}{Theorem}[section]
\newtheorem{cor}[thrm]{Corollary}
\newtheorem{prop}[thrm]{Proposition}
\newtheorem{question}[thrm]{Question}
\theoremstyle{definition}
\newtheorem{defin}[thrm]{Definition}
\newtheorem{rem}[thrm]{Remark}
\newcommand{\CC}{\mathcal{C}}
\newcommand{\NN}{\mathbb{N}}
\newcommand{\FF}{\mathcal{F}}
\newcommand{\UU}{\mathcal{U}}
\newcommand{\VV}{\mathcal{V}}
\newcommand{\la}{\langle}
\newcommand{\ra}{\rangle}
\newcommand{\KK}{\mathcal{K}}
\begin{document}

\title[Games and hereditary Baireness]{Games and hereditary Baireness in hyperspaces and spaces of probability measures}

\author[M. Krupski]{Miko\l aj Krupski}
\address{
Institute of Mathematics\\ University of Warsaw\\ ul. Banacha 2\\
02--097 Warszawa, Poland }
\email{mkrupski@mimuw.edu.pl}

\begin{abstract}
We establish that the existence of a winning strategy in certain topological games, closely related to a strong game of Choquet,
played in a topological space $X$ and its hyperspace
$K(X)$ of all nonempty compact subsets of $X$
equipped with the Vietoris topology, is equivalent for one of the players. For a separable metrizable space $X$,
we identify a game-theoretic condition equivalent to $K(X)$ being hereditarily Baire. It implies quite easily a recent result of Gartside, Medini and Zdomskyy
that characterizes hereditary Baire property of hyperspaces $K(X)$ over separable metrizable spaces $X$ via the Menger property of the remainder of a
compactification of $X$. Subsequently, we use topological games to study hereditary Baire property in spaces of probability measures and in hyperspaces over
filters on natural numbers. To this end, we introduce a notion of strong $P$-filter $\FF$ and prove that it is equivalent to $K(\FF)$ being hereditarily Baire.
We also show that if $X$ is separable metrizable and
$K(X)$ is hereditarily Baire, then the space $P_r(X)$ of Borel probability Radon measures on $X$ is hereditarily Baire too. It follows that there exists (in ZFC)
a separable metrizable space $X$ which is not completely metrizable with $P_r(X)$ hereditarily Baire. As far as we know this is the first example of this kind. 
\end{abstract}

\subjclass[2010]{Primary: 54B20, 54E52, 60B05, 91A44, Secondary: 54D40, 54D20, 54D80}

\keywords{Hyperspace, Vietoris topology, Hereditarily Baire space, Strong Choquet game, Porada game, Menger space, Menger at infinity space, Filter,
Strong P-filter,
Space of probability measures, Prohorov space}

\maketitle

\section{Introduction}
In this paper we shall examine the interplay between certain topological games played in a topological space $X$ and its hyperspace $K(X)$ of all nonempty compact subsets
of $X$ endowed with the Vietoris topology. The
games that are considered are modifications of the celebrated Banach-Mazur game which is known to characterize the Baire property in topological spaces.
Recall that a topological space $X$ is
\textit{Baire} if the Baire Category Theorem is valid in $X$, i.e. for any countable family $\{U_n:n\in\omega\}$
consisting of nonempty open and dense subsets of $X$ the intersection $\bigcap_{n\in\omega}U_n$ is dense in $X$. A space $X$ is \textit{hereditarily Baire}
if any closed subspace of $X$ is Baire. A classical Hurewicz's theorem \cite{Hu1} asserts that, for separable metrizable spaces $X$
(in fact for first-countable regular spaces \cite{vanDouwen}, \cite{Debs}) this is equivalent
to saying that the space $\mathbb{Q}$ of rational numbers does not embed into $X$ as a closed subspace, i.e. we have:
\begin{thrm}(Hurewicz)\label{thrm_Hurewicz}
Let $X$ be a separable metrizable space. The space $\mathbb{Q}$ of rational numbers embeds into $X$ if and only if $X$ is not hereditarily Baire.  
\end{thrm}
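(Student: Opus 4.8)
The plan is to prove the two implications separately, reading ``embeds'' as ``embeds as a closed subspace'' (as the discussion above indicates, that is the intended meaning: the literal statement is false, since $\mathbb{Q}$ embeds into the hereditarily Baire space $\mathbb{R}$). One direction is immediate: $\mathbb{Q}$ is not a Baire space, for writing $\mathbb{Q}=\{q_n:n\in\omega\}$ the sets $\mathbb{Q}\setminus\{q_n\}$ are open and dense with empty intersection; hence a closed copy of $\mathbb{Q}$ inside $X$ is a closed non-Baire subspace, witnessing that $X$ is not hereditarily Baire.

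For the converse, assume $X$ is not hereditarily Baire; I would first pass to a convenient closed subspace. Fix a closed $F\subseteq X$ that is not Baire and dense open sets $G_n\subseteq F$ with $\bigcap_n G_n$ not dense in $F$; choose a nonempty open $W\subseteq F$ disjoint from $\bigcap_n G_n$, so that $W\subseteq\bigcup_n D_n$ with $D_n:=F\setminus G_n$ closed and nowhere dense in $F$. Put $C:=\overline{W}$ (closure in $F$). Then $C$ is closed in $X$, $W$ is dense in $C$, and $C=(C\setminus W)\cup\bigcup_n(D_n\cap C)$ presents $C$ as a countable union of closed sets nowhere dense in $C$: for $C\setminus W$ because $W$ is dense in $C$, and for $D_n\cap C$ because a nonempty relatively open subset of $C$ meets $W$ and hence contains a nonempty open subset of $F$ lying in $D_n$. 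So $C$ is a nonempty separable metrizable space meager in itself; passing to partial unions I fix increasing closed nowhere dense sets $N_0\subseteq N_1\subseteq\cdots$ with $\bigcup_n N_n=C$. Since an isolated point of $C$ would lie in some nowhere dense $N_n$, the space $C$ has no isolated points, and every $C\setminus N_n$ is open and dense in $C$.

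The heart of the matter is a Hurewicz-type scheme inside $C$. Fixing a metric $d\le 1$ on $C$, I would construct by recursion on $|s|$ points $x_s\in C$ and nonempty open sets $U_s\ni x_s$ (indexed by $s\in\omega^{<\omega}$) such that, for all $s$ and all $i\ne j$:
\begin{enumerate}
\item $\overline{U_{s^\frown i}}\subseteq U_s$, $x_s\notin\overline{U_{s^\frown i}}$, and $\overline{U_{s^\frown i}}\cap\overline{U_{s^\frown j}}=\emptyset$;
\item $\overline{U_{s^\frown i}}\cap N_{|s|}=\emptyset$;
\item $\diam U_s\le 2^{-|s|}$, and $\diam U_{s^\frown i}\to 0$ as $i\to\infty$;
\item $x_{s^\frown i}\ne x_s$, and $x_{s^\frown i}\to x_s$ as $i\to\infty$.
\end{enumerate}
This is possible because $U_s\setminus N_{|s|}$ is nonempty and open ($N_{|s|}$ being closed nowhere dense) and, $C$ being crowded, it contains infinitely many distinct points $x_{s^\frown i}\ne x_s$ converging to $x_s$, around each of which a small enough ball $U_{s^\frown i}$ can be chosen (using regularity of $C$) to satisfy (1)--(4). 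Put $Q:=\{x_s:s\in\omega^{<\omega}\}$. Comparing sequences at their longest common prefix, (1) makes the $x_s$ pairwise distinct, so $Q$ is countably infinite; by (4) each $x_s$ is the limit of points of $Q\setminus\{x_s\}$, so $Q$ is crowded; and $Q$ is metrizable.

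It remains to see that $Q$ is closed in $C$, hence in $X$. The crucial observation is that $\bigcap_n\overline{U_{\alpha|n}}=\emptyset$ for every $\alpha\in\omega^\omega$: by (2) a point of this intersection avoids every $N_n$ and hence $\bigcup_n N_n=C$, though it lies in $C$ --- impossible. Now take $y\in\overline{Q}\cap C$ with $y\ne x_\emptyset$; by (3)--(4) all but finitely many $\overline{U_i}$ ($i\in\omega$) lie in a neighbourhood of $x_\emptyset$ missing $y$, so $y\in\overline{Q\cap U_i}\subseteq\overline{U_i}$ for some $i$. Iterating this descent inside $U_i$, we either reach $y=x_s$ for some $s$ (so $y\in Q$) or obtain a branch $\alpha$ with $y\in\bigcap_n\overline{U_{\alpha|n}}$, which cannot happen; so $Q$ is closed. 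Finally, a nonempty countable metrizable space with no isolated points is homeomorphic to $\mathbb{Q}$ (Sierpi\'nski's theorem), so $Q$ is a closed copy of $\mathbb{Q}$ in $X$. I expect the main obstacle to be carrying out the scheme and the closedness check with enough control on diameters and disjointness; the single indispensable use of the hypothesis is condition (2), which is exactly the meagreness-in-itself of $C$ forcing the branches to escape, so that $Q$ acquires no spurious limit points. The remaining steps --- the reduction to $C$ and the appeal to Sierpi\'nski --- are routine.
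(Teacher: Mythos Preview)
Your proof is correct. Note, however, that the paper does not supply a proof of this theorem at all: it is stated as a classical result, attributed to Hurewicz \cite{Hu1} (with the extension to first-countable regular spaces credited to van Douwen \cite{vanDouwen} and Debs \cite{Debs}), and then used as a tool throughout. So there is no argument in the paper to compare against.

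Your approach is the standard one: reduce to a closed subspace $C$ that is meager in itself (hence crowded), build an $\omega^{<\omega}$-indexed Cantor-type scheme whose closures at level $n$ avoid the $n$-th nowhere dense piece, and observe that the resulting countable crowded set $Q$ is closed because any putative limit point outside $Q$ would have to lie along an infinite branch and therefore miss every $N_n$. The appeal to Sierpi\'nski's characterization of $\mathbb{Q}$ at the end is appropriate. Two cosmetic remarks: (i) your sentence explaining why $D_n\cap C$ is nowhere dense is slightly tangled---the intended point is that a nonempty relatively open subset of $C$ meets $W$, giving a nonempty $F$-open set that would then lie inside $D_n$, contradicting that $D_n$ is nowhere dense in $F$; (ii) you never name $U_\emptyset$ and $x_\emptyset$ explicitly, though it is clear you mean $U_\emptyset=C$ and $x_\emptyset$ any point of $C$. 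Neither affects the validity of the argument. Your reading of ``embeds'' as ``embeds as a closed subspace'' is correct and matches the paper's own phrasing in the paragraph preceding the theorem.
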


For a separable metrizable space $X$, we identify a game-theoretic condition equivalent to $K(X)$ being hereditarily Baire,
which turns out to be useful.
For instance, using a result of Telgarsky from \cite{T},
we get the following theorem, obtained recently by Gartside, Medini and Zdomskyy \cite{GMZ}, as a corollary (the definition of a Menger space will be given later in
Section \ref{Menger}):
\begin{cor}\label{main1} (Gartside, Medini, Zdomskyy)
For a separable metrizable space $X$, the space $K(X)$ is hereditarily Baire if and only if the space $Z\setminus X$ is Menger,
where $Z$ is arbitrary compact metrizable space that contains $X$.
\end{cor}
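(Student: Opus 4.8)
The plan is to derive Corollary~\ref{main1} by combining the game-theoretic characterization of hereditary Baireness of $K(X)$ that we establish below with a result of Telgarsky \cite{T} characterizing the Menger property through a topological game. I would first dispose of a preliminary point. Given a compact metrizable $Z$ with $X\subseteq Z$, the closure $\overline{X}$ of $X$ in $Z$ is a metrizable compactification of $X$, the set $Z\setminus\overline{X}$ is open in $Z$, hence $\sigma$-compact, and $\overline{X}\setminus X$ is closed in $Z\setminus X$; since the Menger property passes to closed subspaces and is preserved by finite unions, $Z\setminus X$ is Menger if and only if $\overline{X}\setminus X$ is. Thus the assertion of the corollary does not depend on the choice of $Z$, and we may assume that $X$ is dense in $Z$ and write $R:=Z\setminus X$ for the corresponding remainder.

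The core of the argument is the complementation $K\mapsto Z\setminus K$, a bijection between the nonempty compact subsets of $X$ and the open subsets of $Z$ that contain $R$; recall also that a compact subset of $X$ is the same thing as a compact subset of $Z$ contained in $X$, so that $K(X)$ is naturally a subspace of $K(Z)$. Under this complementation, a run of the Menger game on $R$ translates into a run of the game $\mathcal{G}$ occurring in the characterization of ``$K(X)$ is hereditarily Baire'' (played on the $X$-side, inside $Z$), and conversely. Concretely, if in the Menger game ONE plays open covers $\mathcal{U}_n$ of $R$ and TWO answers with finite subfamilies $\mathcal{V}_n\subseteq\mathcal{U}_n$, put $L_n:=Z\setminus\bigcup_{k\le n}\bigcup\mathcal{V}_k$; then $L_0\supseteq L_1\supseteq\cdots$ is a decreasing sequence of compact subsets of $Z$, and TWO wins the run if and only if $\bigcup_n\bigcup\mathcal{V}_n\supseteq R$, equivalently if and only if $\bigcap_n L_n\subseteq X$. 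After the routine reduction to covers drawn from a fixed countable base of $Z$ (which affects neither player's winning-strategy situation), the sequences $(L_n)$ produced in this way are exactly the runs of $\mathcal{G}$ played on the $X$-side, so that $\mathcal{G}$ becomes a faithful copy of the Menger game on $R$. Invoking the relevant result of Telgarsky from \cite{T} one then obtains that ONE has a winning strategy in the Menger game on $R$ if and only if ONE has a winning strategy in $\mathcal{G}$ played in $X$.

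Granting this, the corollary follows by chaining equivalences: $K(X)$ is hereditarily Baire $\iff$ ONE has no winning strategy in $\mathcal{G}$ played in $X$ (our characterization) $\iff$ ONE has no winning strategy in the Menger game on $R$ (the complementation dictionary together with \cite{T}) $\iff$ $R=Z\setminus X$ is Menger (the classical fact that player ONE fails to have a winning strategy in the Menger game precisely when the underlying space is Menger). I expect the main obstacle to be the middle link: one has to verify that the passage $K\leftrightarrow Z\setminus K$ is faithful to the strategies of \emph{both} players -- carrying winning strategies to winning strategies in each direction -- and, in particular, that allowing ONE only covers by basic open subsets of $Z$ confers no spurious advantage and that the two winning conditions, ``$\bigcap_n L_n\subseteq X$'' on the $X$-side and ``$\bigcup_n\bigcup\mathcal{V}_n\supseteq R$'' on the remainder side, really are interchanged by the correspondence. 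This is exactly the point at which the strength of Telgarsky's theorem enters. The compactification-independence remark of the first paragraph, though elementary, must likewise be recorded so that the statement is meaningful for an arbitrary compact metrizable $Z$ containing $X$.
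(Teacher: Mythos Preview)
Your outline has a genuine gap at the point you yourself flag as ``our characterization''. You appeal to a game $\mathcal{G}$ ``played on the $X$-side, inside $Z$'' whose failure of a winning strategy for player I is supposed to be equivalent to $K(X)$ being hereditarily Baire, but you never define $\mathcal{G}$ precisely and never prove this equivalence. From your description (decreasing compacta $L_n\subseteq Z$, winning condition $\bigcap_n L_n\subseteq X$) and your invocation of Telg\'arsky, the game $\mathcal{G}$ you have in mind is essentially the $k$-Porada game $kP(Z,X)$. Telg\'arsky's theorem indeed gives the equivalence of $kP(Z,X)$ with the Menger game $M(Z\setminus X)$; your ``complementation dictionary'' is a sketch of exactly that result. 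What it does \emph{not} give---and what your outline assumes without argument---is that $kP(Z,X)$ detects hereditary Baireness of the hyperspace $K(X)$.

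This missing link is the actual content of the corollary and is what the paper supplies in Theorem~\ref{main}: one shows, by explicitly translating strategies, that player I has a winning strategy in $kP(Z,X)$ if and only if she has one in the Porada game $P(K(Z),K(X))$ played in the hyperspace. The latter, for $Z$ compact metric, is equivalent to the strong Choquet game $\Ch(K(X))$ and hence (Theorem~\ref{thrm:characterization}) to $K(X)$ failing hereditary Baireness. The passage from a game in $Z$ with compact-set moves to a game in $K(Z)$ with point moves is not a formality: a move $(K,U)$ in $kP(Z,X)$ corresponds to the pair $(K,\langle U\rangle)$ in $P(K(Z),K(X))$, but player II's responses in the hyperspace are arbitrary Vietoris-basic sets $\langle V_1,\dots,V_m\rangle$, and one must argue that the relevant information can be pushed back down to $Z$ (and conversely that a single open $V\subseteq Z$ lifts to a legal hyperspace move). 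Your complementation map $K\mapsto Z\setminus K$ lives entirely at the level of $Z$ and does not address this transfer to $K(Z)$. Until you supply that argument, the chain of equivalences in your final paragraph is broken at its first link.
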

Moreover, our approach simplifies the proof of the above statement. It follows that there are subsets of the unit interval $[0,1]$ not being $G_\delta$ with
hereditarily Baire hyperspace $K(X)$.

Interestingly, the condition '$K(X)$ is hereditarily Baire' appears quite naturally in many different contexts. In this paper we shall exhibit
two instances of this phenomenon. The first one is related to the important notion of Prohorov spaces whereas the second one deals with
filters on the set of natural numbers.

The class of Prohorov spaces
arises naturally from the following fundamental theorem in probability theory
due to Yu.V. Prohorov: In a separable completely metrizable space $X$, every compact subset $C$
of the space $P_r(X)$ of Radon probability measures on $X$ endowed with the weak topology is uniformly tight,
i.e. given $\varepsilon>0$ one can find a compact set $K\subseteq X$ such that
for every $\mu\in C$ we have $\mu(K)>1-\varepsilon$.
A topological space $X$ is called \textit{Prohorov} if every compact subset of $P_r(X)$ is uniformly tight.
So far no purely
topological description of Prohorov spaces
is known. It is also not clear if in ZFC there is a subspace of the interval $[0,1]$ which is Prohorov but not completely metrizable \cite[page 225]{Bo}.
One of the central results on (non)Prohorov spaces was proved by D. Preiss in \cite{P}. It asserts that the space $\mathbb{Q}$ of the rational numbers is not
Prohorov. Combining this result with Hurewicz's Theorem \ref{thrm_Hurewicz} we infer that
if a separable metrizable space $X$ is Prohorov, then $X$ must be hereditarily Baire. But actually more is true. It is
known that if $X$ is Prohorov then the space $P_r(X)$ is Prohorov too
(see \cite[Theorem 1]{W1}, \cite[8.10.16]{Bo}). Hence, if a separable metrizable space $X$ is Prohorov, then $P_r(X)$ must be hereditarily Baire. This calls for research on
(hereditary) Baireness of the space of measures $P_r(X)$. This topic was studied in the 1970s and 1980s (see e.g. \cite{B}, \cite{BC}).
But since then no substantial
progress in this area was made and we still don't have the full picture of completeness type properties in spaces of measures.
We will show that if the hyperspace $K(X)$ is hereditarily Baire then $P_r(X)$ is also hereditarily Baire.
We conclude that there is (in ZFC) a subset of $[0,1]$ which is not $G_\delta$ and whose space of Borel probability Radon measures is hereditarily Baire.

Our second application of game-theoretic considerations on hereditarily Baire hyperspaces, is concerned with filters on the set $\mathbb{N}$ of natural numbers.
We shall study hereditary Baireness of $K(\mathcal{F})$, where $\mathcal{F}$ is a filter on $\mathbb{N}$
considered as a subspace of the Cantor set $\{0,1\}^\mathbb{N}$. This leads naturally to the notion of a strong $P$-filter that was defined by
Laflamme \cite{La} for ultrafilters but never considered in the general setting of filters.\footnote{In \cite{BHV} and \cite{GHM} the authors consider a similar notion of strong
$P^+$-filter; both notions agree for ultrafilters but in general they are different.} We find a natural context for this notion.
It turns out that a filter on $\mathbb{N}$ is a strong $P$-filter if and only if the hyperspace $K(\FF)$ is hereditarily Baire
if and only if $\{0,1\}^\mathbb{N}\setminus \FF$ is Menger (see Theorem \ref{filters}).
Combining this with a recent result of Bella and Hern\'{a}ndez-Guti\'{e}rrez
\cite{BH-G}
we obtain a nice counterpart of a theorem of Marciszewski from \cite{Marciszewski}, cf. Theorem \ref{filters1}.

It is worth mentioning that our results
give insight to the line of research in topology initiated by Henriksen and Isbell \cite{HI} where one investigates
properties of $X$ by looking at the remainder $bX\setminus X$ of a compactification $bX$ of $X$ (cf. Corollary \ref{main1}).
Let $\mathcal{P}$ be a topological property invariant under images and preimages by perfect maps (e.g. the Menger property). We say that a topological space
$X$ is \textit{$\mathcal{P}$ at infinity} if for some, equivalently for every, compactification $bX$ of $X$, the remainder
$bX\setminus X$
has the property $\mathcal{P}$. For instance, it is well known that $\sigma$-compactness
at infinity is precisely completeness in the sense of \v{C}ech. Heriksen and Isbell \cite{HI} characterized spaces that are Lindel\"of at infinity as those
of countable type.
Mengerness at infinity (which lies between $\sigma$-compactness at infinity and Lindel\"ofness at infinity) was studied by  
several authors, e.g. \cite{AB}, \cite{BZT}, \cite{BH-G}.
We will show
that every space (not necessarily metrizable) that is Menger at infinity must be Baire, which settles a question of M.\ Sakai
\cite[Question 2.8]{AB}.

\section{Notation}

\subsection{Games}
As we have already mentioned, all games that are considered in this paper are modifications of the
Banach-Mazur game.
Recall that, for a topological space $X$, the
\textit{Banach-Mazur game} $BM(X)$ on $X$ is a game with $\omega$-many innings, played by two players: player I and player
II, who alternately choose sets $U_0,U_1,U_2,\ldots$ that are open in $X$ and satisfy
$U_0\supseteq U_1\supseteq U_2\supseteq\ldots$. Player I wins the run of the game if $\bigcap_{n\in\omega}U_n=\emptyset$.
It is well known (cf. e.g., \cite[Theorem 8.11]{Kechris}) that the Banach-Mazur game characterizes the property of being Baire
in the following way:
\begin{thrm}\label{Oxtoby}
(Oxtoby) A nonempty topological space $X$ is Baire if and only if player  I has no winning strategy in the Banach-Mazur
game $BM(X)$.
\end{thrm}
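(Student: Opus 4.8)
The plan is to argue both implications by contraposition. \emph{If $X$ is not Baire, then player~I has a winning strategy.} Fix a countable family $\{D_n : n\in\omega\}$ of dense open subsets of $X$ whose intersection is not dense, together with a nonempty open set $V$ disjoint from $\bigcap_n D_n$. Have player~I open with $V\cap D_0$ and, after player~II's $k$-th move, intersect that move with $D_k$. Density of the $D_n$ makes every move of player~I a nonempty open set, so the strategy is legal, and the run it produces is forced inside $V\cap\bigcap_n D_n=\emptyset$; thus player~I wins. This direction is routine.

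\emph{If player~I has a winning strategy $\sigma$, then $X$ is not Baire.} Let $U^{*}=\sigma(\emptyset)$ be player~I's opening move, a nonempty open set; since $U^{*}$ is nonempty and open, it suffices to produce dense open sets $D_n$ with $U^{*}\cap\bigcap_n D_n=\emptyset$. I would build, by recursion on $n$, a family $\mathcal{F}_n$ of \emph{$\sigma$-positions of length $n$} --- legal partial runs consistent with $\sigma$ in which player~II has just moved, with $\mathcal{F}_0$ the singleton consisting of the empty run --- subject to the invariant that, writing $\sigma{*}s$ for the move dictated by $\sigma$ for player~I at the position $s$, the sets $\{\sigma{*}s : s\in\mathcal{F}_n\}$ are pairwise disjoint with union dense in $U^{*}$, while every member of $\mathcal{F}_{n+1}$ is obtained from a member of $\mathcal{F}_n$ by appending one move of player~II. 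Granting the construction, put $D_n=(X\setminus\overline{U^{*}})\cup\bigcup\{\sigma{*}s : s\in\mathcal{F}_n\}$; this set is open, and it is dense because a nonempty open set either meets $X\setminus\overline{U^{*}}$ or has a nonempty open trace on $U^{*}$, which then meets $\bigcup\{\sigma{*}s:s\in\mathcal{F}_n\}$, dense in $U^{*}$.

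The recursive step is the only place where I expect real difficulty. Fix $s\in\mathcal{F}_n$ and put $W_s=\sigma{*}s$; I must find a family $\mathcal{G}_s$ of nonempty open subsets of $W_s$ such that $\{\sigma{*}(s^\frown V) : V\in\mathcal{G}_s\}$ is pairwise disjoint with union dense in $W_s$, where $s^\frown V$ denotes the $\sigma$-position obtained from $s$ by player~II playing the nonempty open set $V\subseteq W_s$. The obstacle is that $\sigma$ may ``pull away'' from $V$: player~I's reply $\sigma{*}(s^\frown V)$ is only \emph{some} nonempty open subset of $V$, so a maximal antichain of open subsets of $W_s$ need not leave the replies dense. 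Instead I would apply Zorn's lemma to the collection of all families $\mathcal{G}$ of nonempty open subsets of $W_s$ for which $\{\sigma{*}(s^\frown V):V\in\mathcal{G}\}$ is pairwise disjoint: for a maximal such $\mathcal{G}_s$ the replies automatically have dense union in $W_s$, since any nonempty open $G\subseteq W_s$ missing that union could be adjoined to $\mathcal{G}_s$ --- its reply $\sigma{*}(s^\frown G)$ sits inside $G$ and hence is disjoint from all the others --- contradicting maximality. Setting $\mathcal{F}_{n+1}=\{s^\frown V : s\in\mathcal{F}_n,\ V\in\mathcal{G}_s\}$ then preserves the invariant, because replies attached to distinct members of $\mathcal{F}_n$ sit inside the pairwise disjoint sets $W_s$, and the union of all the new replies is dense in $\bigcup\{W_s : s\in\mathcal{F}_n\}$, hence in $U^{*}$.

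Finally, suppose toward a contradiction that $X$ is Baire. Then $\bigcap_n D_n$ is dense, so I may pick $x\in U^{*}\cap\bigcap_n D_n$. Since $x\in U^{*}$, for each $n$ the point $x$ lies in $\bigcup\{\sigma{*}s:s\in\mathcal{F}_n\}$, hence in $\sigma{*}s_n$ for a unique $s_n\in\mathcal{F}_n$ by pairwise disjointness. As $s_{n+1}=s^\frown V$ for some $s\in\mathcal{F}_n$, $V\in\mathcal{G}_s$ with $\sigma{*}s_{n+1}=\sigma{*}(s^\frown V)\subseteq V\subseteq W_s=\sigma{*}s$, the point $x$ also lies in $\sigma{*}s$, which forces $s=s_n$; so the positions $s_0\subseteq s_1\subseteq s_2\subseteq\cdots$ are nested. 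Their union is an infinite run of $BM(X)$ consistent with $\sigma$ in which $x$ belongs to every set played, so the intersection of that run is nonempty --- contradicting that $\sigma$ is a winning strategy for player~I. Therefore $X$ is not Baire, and the proof is complete.
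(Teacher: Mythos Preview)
The paper does not actually prove this theorem; it is stated as a well-known classical result with a reference to \cite[Theorem 8.11]{Kechris}. Your argument is correct and is essentially the standard proof found there: the easy direction builds a winning strategy for player~I from a witness to non-Baireness, and the substantive direction unfolds a putative winning strategy $\sigma$ into a tree of positions via Zorn's lemma so that the $\sigma$-replies at each level form a pairwise disjoint family with dense union, yielding dense open sets whose intersection avoids $U^{*}$. Since the paper offers no competing argument, there is nothing to compare --- your proof stands on its own and matches the cited source.
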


Let $Z$ be a space and let $X\subseteq Z$ be a subspace of $Z$.

The \textit{strong Choquet game} on $Z$ with values in $X$ is a game with $\omega$-many innings, played alternately
by two players: I and II.
Player I begins the game and makes the first move by choosing
a pair $(x_0,U_0)$, where $x_0\in X$ and $U_0$ is an open neighborhood of $x_0$ in $Z$. Player II responds by choosing an open
(in $Z$) set $V_0$ such that
$x_0\in V_0\subseteq U_0$. In the second round of the game, player I picks a pair $(x_1,U_1)$, where
$x_1\in V_0$ and $U_1$ is an open subset of $Z$ with
$x_1\in U_1\subseteq V_0$. Player II responds by picking an open (in $Z$) set $V_1$ such that $x_1\in V_1\subseteq U_1$. The game continues in this way and stops
after $\omega$ many rounds (cf. Figure \ref{fig:fig1}). Player II wins the game if
$\left(\bigcap_{n\in\omega} U_n\right)\cap X\neq\emptyset$. Otherwise, I wins.

The game described above is denoted by $Ch(Z,X)$ and by $Ch(X)$ we will denote the game $Ch(X,X)$.

\medskip

\begin{figure*}[h]
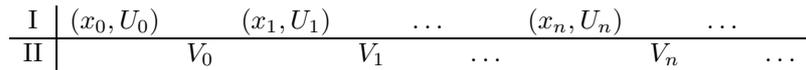

	\centering
\begin{tabular}{c|c c c c c c c c c c}
I  & $(x_0,U_0)$&  & $(x_1,U_1)$& & $\ldots$  & & $(x_n,U_n)$ & & $\ldots$  \\
\hline
II &       & $V_0$ & &$V_1$ & & $\ldots$ & & $V_n$ & & $\ldots$ \\
\end{tabular}
\caption{The strong Choquet game}
\label{fig:fig1}
\end{figure*}
\medskip

It is known (see \cite{vanDouwen}, \cite{Telgarsky}, \cite{Debs}) that, for first-countable regular spaces (in particular for all metrizable spaces),
the strong Choquet game characterizes the property of being hereditarily Baire in the following way:

\begin{thrm}\label{thrm:characterization}
Let $X$ be a first-countable regular space. Then $X$ is hereditary Baire if and only if player I has no winning strategy in the
strong Choquet game $Ch(X)$.
\end{thrm}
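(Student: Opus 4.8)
The plan is to route everything through two facts already available, Oxtoby's theorem (Theorem~\ref{Oxtoby}) and the first-countable-regular version of Hurewicz's theorem (Theorem~\ref{thrm_Hurewicz} together with the remark after it). By Oxtoby, $X$ is hereditarily Baire exactly when for every closed $F\subseteq X$ player~I has no winning strategy in $BM(F)$, and by Hurewicz, $X$ is hereditarily Baire exactly when $\mathbb Q$ does not embed into $X$ as a closed subspace. So it suffices to prove
\[
\text{player~I has a winning strategy in }Ch(X)\iff X\text{ contains a closed copy of }\mathbb Q ,
\]
and I would handle the two implications in turn.

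\emph{From a closed copy of $\mathbb Q$ to a winning strategy for player~I.} Let $D\subseteq X$ be closed with $D\cong\mathbb Q$; fix a metric $\rho$ on $D$ with $\diam_\rho D\le1$, an enumeration $D=\{d_n:n\in\omega\}$, and a decreasing neighbourhood base $\{B_k(p)\}_k$ at each $p\in X$ with $\overline{B_{k+1}(p)}^{\,X}\subseteq B_k(p)$. Player~I plays only points of $D$: at the $n$-th inning, given $V_{n-1}\ni x_{n-1}$ (with $V_{-1}=X$), player~I picks $x_n\in V_{n-1}\cap D$ and an open $U_n$ with $x_n\in U_n$, $\overline{U_n}^{\,X}\subseteq V_{n-1}\cap U_{n-1}\cap B_n(x_n)$, and $\overline{U_n}^{\,X}\cap D\subseteq\{y\in D:\rho(x_n,y)<2^{-n}\}$. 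This last ``clinging'' condition is attainable because $D$ is closed and zero-dimensional: a small clopen-in-$D$ neighbourhood $C$ of $x_n$ equals the trace on $D$ of the open set $X\setminus\overline{D\setminus C}^{\,X}$, whose $X$-closure meets $D$ in $C$ plus a set nowhere dense in $D$ and not containing $x_n$, which regularity lets player~I shrink away from. In addition, player~I steers the play away from each $d_k$: working through $d_0,d_1,\dots$ one at a time, as soon as $V_{n-1}\cap D$ — which is infinite — contains a point at positive $\rho$-distance from the $d_k$ currently treated, player~I plays it and shrinks $U_n$ to $\rho$-diameter on $D$ below a third of that distance, thereby locking $d_k$ out of all later $U_m$. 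Hence $\bigl(\bigcap_n U_n\bigr)\cap D=\emptyset$; and since the closures nest and $U_n\subseteq B_n(x_n)$, any point of $\bigcap_n U_n=\bigcap_n\overline{U_n}^{\,X}$ is the limit of $(x_n)$, so it lies in $\overline D^{\,X}=D$. Therefore $\bigcap_n U_n=\emptyset$ and player~I wins. (If $X$ is itself metrizable one takes $\rho$ and the $B_k(x_n)$ from a metric on $X$, and ``the intersection point is the limit of $(x_n)$'' is immediate; choosing the bases so this remains true is the one delicate point in the general case.)

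\emph{From a winning strategy for player~I to a closed copy of $\mathbb Q$.} Let $\sigma$ be a winning strategy for player~I in $Ch(X)$. Normalize $\sigma$ so that it always produces $\overline{U_{n+1}}^{\,X}\subseteq V_n$ (possible by regularity, without harming winning). Fix a countable neighbourhood base at each point; since player~II loses nothing by only responding with basic neighbourhoods of $x_n$, the tree of $\sigma$-plays may be taken countably branching, hence countable. I would then carve out, by a careful inductive pruning, a countable subtree $T$ so that $D:=\{x_s:s\in T\}$ is crowded — below each node one retains cofinally many successors whose points converge to $x_s$, which is available because player~II's basic responses exhaust a neighbourhood base of $x_s$ — and closed in $X$: after the tree is thinned appropriately, any sequence from $D$ convergent in $X$ is, up to a subsequence, the sequence $x_{s_0},x_{s_1},\dots$ along a single branch $s_0\subsetneq s_1\subsetneq\cdots$, and then $\{x_m:m\ge n\}\subseteq U_n$ together with $\overline{U_{n+1}}^{\,X}\subseteq U_n$ forces the limit into $\bigcap_n U_n$, contradicting that $\sigma$ wins. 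A countable crowded space is meager in itself, so $D$ is a nonempty closed non-Baire subspace of $X$ — equivalently, being also metrizable, a copy of $\mathbb Q$ by Sierpiński — and the equivalence, hence the theorem, follows.

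The step I expect to be the real obstacle is making $D$ in the second implication genuinely closed in $X$, i.e.\ extracting a closed copy of the rationals from the strategy tree; this is essentially van Douwen's theorem on closed copies of $\mathbb Q$, and its inductive pruning (controlling level by level which new limit points are admitted) is precisely where first-countability and regularity of $X$ are needed. Its mirror in the first implication — hugging the neighbourhoods $U_n$ tightly enough to $D$ that $\bigcap_n U_n$ cannot leak into $X\setminus D$ — is transparent in the metric case and requires the same care with preassigned bases otherwise.
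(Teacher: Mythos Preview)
The paper does not prove Theorem~\ref{thrm:characterization}; it is quoted as a known result with references to van Douwen, Telg\'arsky, and Debs, and no argument is given. So there is no in-paper proof to compare your attempt against.

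That said, your outline follows the same route as the cited sources (reduce to the existence of a closed copy of $\mathbb{Q}$), and you correctly isolate the two genuinely hard steps. Two remarks on the sketch itself. In the first implication your claim that ``any point of $\bigcap_n U_n$ is the limit of $(x_n)$'' does not follow from $U_n\subseteq B_n(x_n)$ alone: in a non-metrizable first-countable space, $y\in B_n(x_n)$ for all $n$ need not force $x_n\to y$, since neighbourhood bases at different points are unrelated. One standard repair is a diagonal bookkeeping: along the play, for every pair $(k,m)$ eventually force $U_n\subseteq B_m(x_k)$ (possible because the $x_n$ are $\rho$-Cauchy in $D$, hence eventually land in any $B_m(x_k)$); then any $y\in\bigcap_n U_n$ lies in $\bigcap_m B_m(x_k)=\{x_k\}$ for every $k$, which is impossible. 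You flag this as ``delicate'' but do not actually carry it out. In the second implication, the assertion that after pruning ``any sequence from $D$ convergent in $X$ is, up to a subsequence, along a single branch'' is precisely the content of the Debs/van Douwen argument and needs a real construction; as written it is an assertion, not a proof. So the proposal is a correct roadmap with honestly labelled gaps, but it is not yet a proof.
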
	

Let us describe now some known modifications of the game $Ch(Z,X)$ that are discussed in the present paper. Our terminology
follows \cite{T}.

As above, $Z$ is a space and $X$ is a subspace of $Z$. The \textit{Porada game} on $Z$ with values in $X$ is
denoted by $P(Z,X)$ and is played as the strong Choquet game
$Ch(Z,X)$ with the only difference that player II wins the game $P(Z,X)$
if $\emptyset\neq \bigcap_{n\in\omega} U_n \subseteq X$ and otherwise player I wins. The game $P(Z,X)$ was introduced in \cite{Po}.
The \textit{$k$-Porada game} $kP(Z,X)$ on $Z$ with values in $X$ is a modification of the Porada game
$P(Z,X)$ in which player I instead of points $x_n\in X$ and their neighborhoods $U_n$, picks compact sets $K_n\subseteq X$
and open sets $U_n\subseteq Z$ with $K_n\subseteq X\cap U_n$. Player II responds by choosing sets $V_n$ open in $Z$ satisfying
$K_n\subseteq V_n \subseteq U_n$ (cf. Figure \ref{fig:fig2}). The winning condition is the same
as in the Porada game, i.e. player II wins the game $kP(Z,X)$ if $\emptyset\neq \bigcap_{n\in\omega} U_n \subseteq X$
and otherwise player I wins.

\begin{figure*}[h]
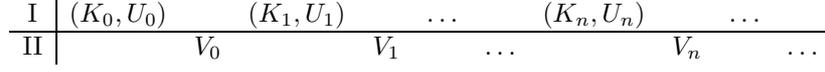

	\centering
\begin{tabular}{c|c c c c c c c c c c c }
I  & $(K_0,U_0)$&  & $(K_1,U_1)$& & $\ldots$  & & $(K_n,U_n)$ & & $\ldots$ \\
\hline
II &       & $V_0$ & &$V_1$ & & $\ldots$ & & $V_n$ & & $\ldots$ \\
\end{tabular}
\caption{The $k$-Porada game}
\label{fig:fig2}
\end{figure*}

\subsection{Strategies} For topological spaces $X\subseteq Z$, let $G(Z,X)$ be one of the games $Ch(Z,X)$ or $P(Z,X)$. Denote by
$\tau_Y$ the collection of all nonempty open subsets of the space $Y$.
A \textit{strategy} of player I in the game $G(Z,X)$ is a map $s$ defined inductively as follows:
$s(\emptyset)\in X\times \tau_Z$. If the strategy $s$ is defined for the first $n$ moves then an $n$-tuple $(V_0,V_1,\ldots,V_{n-1})\in \tau_Z^n$ is called
\textit{admissible} if $x_0\in V_0\subseteq U_0$ and $x_i\in V_i\subseteq U_i$, where $i\in\{1,\ldots , n-1\}$ and $(x_i,U_i)=s(V_0,\ldots V_{i-1})$. For any
admissible $n$-tuple $(V_0,\ldots, V_{n-1})$ we choose a pair $(x_n,U_n)\in V_{n-1}\times \tau_{V_{n-1}}$.
A strategy $s$ of player I in the game $G(Z,X)$ is called \textit{winning} if player I wins every run of the game $G(Z,X)$ in which she plays according to the
strategy $s$.

In a similar manner we define strategies, and winning strategies in the games $kP(Z,X)$ and $BM(X)$.

\begin{rem}\label{uwaga}
Let $X\subseteq Z$ be topological spaces. Note that if we consider the following conditions:
\begin{enumerate}[(i)]
 \item Player I has a winning strategy in the Banach-Mazur game $BM(X)$;
 \item Player I has a winning strategy in the strong Choquet game $Ch(Z,X)$;
 \item Player I has a winning strategy in the Porada game $P(Z,X)$;
 \item Player I has a winning strategy in the k-Porada game $kP(Z,X)$;
\end{enumerate}
then $(i)\Rightarrow (ii)\Rightarrow (iii)\Rightarrow (iv)$.
\end{rem}

\subsection{Menger Spaces}\label{Menger} Recall that a topological space $X$ is called \textit{Menger}
if for every sequence $(\UU_n)_{n\in \omega}$ of open covers of $X$, there is a sequence $(\VV_n)_{n\in\omega}$
such that for every $n$, $\VV_n$ is a finite subfamily of $\UU_n$ and $\bigcup_{n\in\omega} \VV_n$ covers $X$.
It is well known that the Menger property may be characterized in terms of games in the following way
(see e.g. \cite[Theorem 13]{Scheepers} or \cite[Theorem 2.32]{AD}, cf. \cite[Satz X]{Hu2}):
For a topological space $X$, let $M(X)$ be a two player game in which player I begins and in the $n$-th move chooses
an open cover $\mathcal{U}_n$ of $X$. Player II responds by picking in his $n$-th move a finite subcollection
$\mathcal{V}_n\subseteq\mathcal{U}_n$. Player II wins the run of the game if $\bigcup_n\mathcal{V}_n$ is a cover of $X$.
Otherwise, player I wins.
The game $M(X)$ is called the \textit{Menger game}.

\begin{thrm}\label{Menger-charakteryzajca}(Hurewicz \footnote{In \cite{AD} the authors attribute this result to Witold Hurewicz \cite{Hu2}. Although Hurewicz did not consider games in \cite{Hu2},
his proof of \cite[Satz X]{Hu2} can easily be expressed in the language of games, cf. \cite[Theorem 13]{Scheepers}.})
A topological space $X$ is Menger if and only if player I does not have a winning strategy in the Menger game $M(X)$.
\end{thrm}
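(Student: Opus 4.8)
My plan is to prove the two implications separately, the first being routine and the second being the classical Hurewicz argument. For the easy direction I would argue the contrapositive: if $X$ is not Menger, fix a sequence $(\mathcal{U}_n)_{n\in\omega}$ of open covers of $X$ witnessing this --- so that $\bigcup_{n\in\omega}\mathcal{V}_n$ fails to cover $X$ for every choice of finite subfamilies $\mathcal{V}_n\subseteq\mathcal{U}_n$ --- and let player I follow the strategy in $M(X)$ that plays $\mathcal{U}_n$ in the $n$-th inning, ignoring II's moves; this is plainly winning. Hence, if player I has no winning strategy in $M(X)$, then $X$ is Menger.

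For the converse I would show that if $X$ is Menger then every strategy $\sigma$ of player I in $M(X)$ can be defeated by player II. First I would reduce to strategies producing only countable covers: since a Menger space is Lindel\"of, I may replace each value $\sigma(s)$ by a fixed countable subcover $\mathcal{C}(s)\subseteq\sigma(s)$; a run of $M(X)$ in which II defeats the resulting strategy is automatically a run against $\sigma$, because II's successive moves are finite subfamilies of $\mathcal{C}(s)\subseteq\sigma(s)$. Assuming now that $\sigma$ produces only countable covers, I would unfold it into the tree $T$ of all finite sequences $(\mathcal{V}_0,\dots,\mathcal{V}_{k-1})$ that form legal partial plays against $\sigma$ (i.e., each $\mathcal{V}_i$ is a finite subfamily of $\sigma(\mathcal{V}_0,\dots,\mathcal{V}_{i-1})$). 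Since at every node player II chooses among only countably many finite subfamilies of a countable cover, each level of $T$ is countable.

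Finally I would run the Hurewicz argument on this tree: from the countably many covers $\{\sigma(s):s\in T\}$ one manufactures, by combining finitely many of them at a time so as to stay within open covers, an auxiliary sequence of open covers of $X$; the Menger property of $X$ then supplies finite selections from these covers whose union covers $X$; and, with appropriate bookkeeping, one reads off from these selections the moves of player II in a single run of $M(X)$ against $\sigma$ that is won by II. Since $\sigma$ was arbitrary, player I has no winning strategy in $M(X)$, which completes the proof. I expect this last step to be the main obstacle: the Menger property yields finite selections distributed over the \emph{whole} tree $T$, whereas a single run of the game meets only one branch of $T$, so the bookkeeping must be organised so that the selections assemble, along one branch, into a cover of $X$. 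For the details of this combinatorial step I would follow \cite[Satz X]{Hu2}; see also \cite[Theorem 13]{Scheepers} and \cite[Theorem 2.32]{AD}.
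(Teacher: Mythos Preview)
The paper does not prove this statement; it is quoted as a classical result and simply attributed to Hurewicz with pointers to \cite{Hu2}, \cite[Theorem 13]{Scheepers}, and \cite[Theorem 2.32]{AD}. There is therefore no proof in the paper to compare your proposal against.

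As for the proposal on its own terms: the easy direction is correct and complete, and in the hard direction your reduction to strategies producing countable covers (using that Menger implies Lindel\"of) and the construction of the countable play-tree $T$ are both standard and sound. The substance of the theorem, however, lies entirely in the step you explicitly defer to the references --- passing from Menger selections distributed over all of $T$ to a single branch along which II's moves cover $X$. You are candid that this is the obstacle, but as written your proposal is an outline with the decisive combinatorial idea omitted; a reader who has not seen the Hurewicz--Scheepers argument could not complete it from what you wrote. That said, since the paper itself merely cites the result, your sketch already goes further than the paper does.
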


Clearly, $$\sigma\text{-compact}\Rightarrow \text{Menger}\Rightarrow \text{Lindel\"of}.$$
The space $\omega^\omega$ is Lindel\"of but not Menger. There are (in ZFC) separable metrizable Menger spaces which are not
$\sigma$-compact, cf. \cite{FM}, \cite{BT}

\subsection{Vietoris topology}
If $X$ is a topological space, then by $K(X)$ we denote the space of all nonempty compact subsets of $X$ equipped with the Vietoris topology, i.e. basic open sets in $K(X)$ are of the form
$$\langle U_0,\ldots , U_n  \rangle=\{K\in K(X) : K\subseteq \bigcup_{i=0}^n U_i\quad \text{and}\quad K\cap U_i\neq \emptyset,\text{ for every } i\leq n\},$$
where each $U_i$ is an open subset of $X$.

The proposition below is a part of folklore. 

\begin{prop}\label{folklor}
Let $\mathcal{C}$ be a compact subset of $K(X)$, i.e., $\mathcal{C}$ is a family of compact subsets of $X$ which itself is compact in the Vietoris topology on $K(X)$.
Then $\bigcup \mathcal{C}$ is a compact subset of $X$.
\end{prop}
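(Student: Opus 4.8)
The plan is to verify compactness of $\bigcup\mathcal{C}$ straight from the open-cover definition, the key tool being that for a single open set $W\subseteq X$ the Vietoris set $\langle W\rangle=\{K\in K(X):K\subseteq W\}$ is open in $K(X)$ (it is the basic open set $\langle U_0\rangle$ with $U_0=W$, since a nonempty $K\subseteq W$ automatically satisfies $K\cap W\neq\emptyset$).

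So, fix an arbitrary family $\{U_i:i\in I\}$ of open subsets of $X$ with $\bigcup\mathcal{C}\subseteq\bigcup_{i\in I}U_i$; it is enough to produce a finite $I_0\subseteq I$ with $\bigcup\mathcal{C}\subseteq\bigcup_{i\in I_0}U_i$. For each $K\in\mathcal{C}$, compactness of $K$ yields a finite $F_K\subseteq I$ with $K\subseteq W_K:=\bigcup_{i\in F_K}U_i$, and hence $K\in\langle W_K\rangle$. The family $\{\langle W_K\rangle:K\in\mathcal{C}\}$ is therefore an open cover of the compact space $\mathcal{C}$, so it admits a finite subcover $\langle W_{K_1}\rangle,\dots,\langle W_{K_m}\rangle$. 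Now every $L\in\mathcal{C}$ lies in some $\langle W_{K_j}\rangle$, i.e. $L\subseteq W_{K_j}$, so
\[
\bigcup\mathcal{C}\ \subseteq\ \bigcup_{j=1}^m W_{K_j}\ =\ \bigcup_{j=1}^m\bigcup_{i\in F_{K_j}}U_i,
\]
and $I_0:=\bigcup_{j=1}^m F_{K_j}$ is the desired finite set. Thus $\bigcup\mathcal{C}$ is compact.

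I do not expect any genuine obstacle: once one thinks of covering each $K\in\mathcal{C}$ by a single ``fattened'' open set $W_K$ and passing to the corresponding one-coordinate Vietoris neighborhoods, compactness of $\mathcal{C}$ does all the work, and the finite subcover upstairs descends verbatim to a finite subcover of $\bigcup\mathcal{C}$ downstairs. The only point that needs a line of justification is that the sets $\langle W\rangle$ are indeed open in $K(X)$. An alternative, essentially equivalent, route would be a net argument, but the open-cover proof is shorter and cleaner.
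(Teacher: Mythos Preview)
Your proof is correct and follows essentially the same route as the paper's: both arguments take an open cover of $\bigcup\mathcal{C}$, use compactness of each $K\in\mathcal{C}$ to get a finite subcover of $K$, lift to a Vietoris-open cover of $\mathcal{C}$, and then use compactness of $\mathcal{C}$ to extract a single finite index set. The only cosmetic difference is that the paper uses the full basic sets $\langle U_t:t\in F\rangle$ while you collapse them to one-coordinate sets $\langle W_K\rangle$ with $W_K=\bigcup_{i\in F_K}U_i$; this changes nothing of substance.
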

\begin{proof}
Let $\mathcal{U}=\{U_t:t\in T\}$ be an open cover of $\bigcup\mathcal{C}$. Consider the following collection $\mathcal{W}$ of open subsets of $K(X)$:
$$\mathcal{W}=\{\la U_t:t\in F\ra:F\subseteq T\text{ is finite}\}.$$
We claim that $\mathcal{W}$ covers $\mathcal{C}$. Indeed, take $A\in \mathcal{C}\subseteq K(X)$. We have $A\subseteq \bigcup\mathcal{C}$ and thus $\UU$ is an open
cover of $A$. By compactness of $A$, there is a finite $F\subseteq T$ such that $A\subseteq \bigcup\{U_t:t\in F\}$ and $A$ meets each $U_t$, for $t\in F$.
It follows that $A\in \la U_t:t\in F\ra$, so $\mathcal{W}$ covers $\mathcal{C}$.

Now, since $\mathcal{C}$ is compact, there is a finite collection $\{F_1,\ldots , F_n\}$ of finite subsets of $T$ such that the family
$$\left\{\la U_t:t\in F_i\ra:i=1,\ldots ,n\right\}$$
covers $\mathcal{C}$. It follows that $\{U_t:t\in F_1\cup\ldots\cup F_n\}\subseteq \UU$ is a finite subcover of $\bigcup \mathcal{C}$.
\end{proof}

\section{Equivalence of certain games}

In this section we show that if $X$ is a subspace of a compact Hausdorff space $Z$ then, from player's one viewpoint, the Porada game
$kP(Z,X)$ is equivalent to the
Porada game $P(K(Z), K(X))$ and equivalent to the $k$-Porada game $kP(K(Z),K(X))$. Note that the latter two games are played
in the hyperspace $K(Z)$ and have values in the hyperspace $K(X)$.

Now, suppose that $Z$ is compact metric space containing $X$.
It is known and easy to verify that in this case the games
$Ch(X)$, $Ch(Z,X)$ and
$P(Z,X)$ are equivalent in the sense that player $Y$ has a winning strategy in one of the games if and only if player $Y$ has a winning strategy in all of the games.
In particular, $X$ is hereditarily Baire if and only if player I has no winning strategy in the Porada game $P(Z,X)$. Of course, if $Z$ is compact metric, then
$K(Z)$ is compact metric too. Thereby, we establish that
the game-theoretic condition 'player I has no winning strategy in the $k$-Porada game $kP(Z,X)$' characterizes hereditary Baireness of
the hyperspace $K(X)$.

Corollary \ref{main1} will follow immediately from this assertion and the following 1984 result of Telgarsky:
\begin{thrm}\cite[Theorem 2]{T}\label{thrm_Telgarsky}
if $X\subseteq Z$, where $Z$ is compact (not necessarily metrizable),
then the $k$-Porada game $kP(Z,X)$ is equivalent
(for both players) to the Menger game $M(Z\setminus X)$. 
\end{thrm}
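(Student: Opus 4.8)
The plan is to prove the equivalence by transferring winning strategies between $kP(Z,X)$ and $M(R)$, where I write $R:=Z\setminus X$; since neither game is assumed determined, this amounts to showing, separately for each of the two players, that the player has a winning strategy in $kP(Z,X)$ if and only if it has one in $M(R)$, and these four implications split into two symmetric constructions. Throughout I use that $Z$, being compact and Hausdorff, is normal; that whenever $\mathcal W$ is a finite family of open subsets of $Z$ with $R\subseteq\bigcup\mathcal W$, the set $Z\setminus\bigcup\mathcal W$ is compact and contained in $X$; and that every compact $K\subseteq X$, being disjoint from $R$, has the property that $\{O\subseteq Z\text{ open}:\overline O\cap K=\emptyset\}$ is an open cover of $R$. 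In every play I construct below one arranges $\overline{V_{n+1}}\subseteq V_n$, so that $\bigcap_nU_n\supseteq\bigcap_nV_n=\bigcap_n\overline{V_n}$ is automatically a nonempty compact set; consequently player II wins a run of $kP(Z,X)$ precisely when $\bigcap_nU_n\cap R=\emptyset$, and this is how the $kP$-winning condition gets matched with the covering condition of $M(R)$.

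First construction. Given a winning strategy $\sigma$ of player II in $M(R)$, I build one for player II in $kP(Z,X)$: when player I plays $(K_n,U_n)$, feed $\sigma$ the cover $\mathcal U^*_n=\{O\cap R:O\subseteq Z\text{ open},\ \overline O\cap K_n=\emptyset\}$ of $R$, obtain the finite family $\mathcal V^*_n=\sigma(\mathcal U^*_0,\dots,\mathcal U^*_n)$, let $W_n$ be the union of chosen lifts of its members (so $\overline{W_n}\cap K_n=\emptyset$), and answer, using normality, with an open $V_n$ such that $K_n\subseteq V_n\subseteq\overline{V_n}\subseteq U_n\setminus\overline{W_0\cup\dots\cup W_n}$; this is possible because $\overline{W_i}\subseteq Z\setminus V_i$ for $i<n$ while $K_n\subseteq V_{n-1}\subseteq V_i$, so $K_n$ avoids $\overline{W_0}\cup\dots\cup\overline{W_n}$. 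Since $\sigma$ wins, $\bigcup_nW_n\supseteq R$; since $\bigcap_nU_n$ is nonempty, any $p\in\bigcap_nU_n\cap R$ would lie in some $W_m$ and also in $U_{m+1}\subseteq V_m$, contradicting $\overline{V_m}\cap W_m=\emptyset$; hence $\emptyset\ne\bigcap_nU_n\subseteq X$ and player II wins. The same device, read in the opposite direction, shows that a winning strategy of player I in $kP(Z,X)$ yields one for player I in $M(R)$: player I of $M(R)$ simulates a run of $kP(Z,X)$ with the given strategy playing $kP$-player I, plays in $M(R)$ the cover $\{O\cap R:\overline O\cap K_n=\emptyset\}$ read off from that strategy's move $(K_n,U_n)$, converts player II's response $\mathcal V_n$ into $W_n$ as above, and plays $V_n$ (as $kP$-player II) with $K_n\subseteq V_n\subseteq\overline{V_n}\subseteq U_n\setminus\overline{W_0\cup\dots\cup W_n}$; a point $p\in\bigcap_nU_n\cap R$, which exists because the simulated player I wins and $\bigcap_nU_n\ne\emptyset$, witnesses that $\bigcup_n\mathcal V_n$ fails to cover $R$.

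Second construction. Here the covers played in $M(R)$ are used to manufacture the compact sets played in $kP(Z,X)$. Given a winning strategy $\sigma$ of player II in $kP(Z,X)$, I build one for player II in $M(R)$: when the cover $\mathcal U_n$ of $R$ arrives, lift it to a family $\mathcal O_n$ of open subsets of $Z$ with $\bigcup\mathcal O_n\supseteq R$, put $L_n:=Z\setminus\bigcup\mathcal O_n$ (compact, $\subseteq X$), and let the simulated $kP$-player I play $K_n:=\overline{V_{n-1}''}\cap L_n$ together with a neighbourhood $U_n$ with $\overline{U_n}\subseteq V_{n-1}$, where $V_{n-1}''$ is an open set fixed at the previous round with $\overline{V_{n-1}''}\subseteq V_{n-1}$ (set $V_{-1}=V_{-1}''=Z$); after $\sigma$ answers with $V_n\supseteq K_n$, choose $V_n'$ with $K_n\subseteq V_n'\subseteq\overline{V_n'}\subseteq V_n$, observe that $\overline{V_{n-1}''}\setminus V_n'$ is a compact set disjoint from $L_n$ (a point of it lying in $L_n$ would lie in $\overline{V_{n-1}''}\cap L_n=K_n\subseteq V_n'$), hence contained in $\bigcup\mathcal O_n$, cover it by finitely many members of $\mathcal O_n$, let $\mathcal V_n$ be their traces on $R$, and set $V_n'':=V_n'$. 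One checks inductively that $R\setminus\bigcup_{i\le n}\bigcup\mathcal V_i\subseteq V_n''$; since $\sigma$ wins, $\bigcap_nU_n\cap R=\emptyset$, so $R\setminus\bigcup_n\bigcup\mathcal V_n\subseteq\bigcap_nV_n''\subseteq\bigcap_nU_n$ is disjoint from $R$, hence empty, i.e. $\bigcup_n\mathcal V_n$ covers $R$. Symmetrically, a winning strategy of player I in $kP(Z,X)$ is turned into one for player I in $M(R)$ by the same device, now with the simulated $kP$-player II supplying, through its moves $V_n$, the responses fed back to the $M(R)$-strategy.

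The step I expect to be the main obstacle is the second construction, for a structural reason: a move of player II in $M(R)$ is a finite subfamily of an open cover of $R$, whereas a move of player II in $kP(Z,X)$ is a single open set shrinking a neighbourhood of a compact subset of $X$, and these do not match up directly --- most pointedly, $R$ need not be closed in $Z$, so the compact set $Z\setminus U_n$ cannot simply be covered by finitely many members of a cover of $R$. The resolution is to let the compact complement $L_n=Z\setminus\bigcup\mathcal O_n\subseteq X$ of a lifted cover play the role of the compact set in $kP(Z,X)$, truncated to the current shrunk region $\overline{V_{n-1}''}$ so as to remain a legal move, and to propagate the invariant that the still-uncovered part of $R$ always lies inside that shrunk region; verifying that the invariant survives each round, and treating separately the degenerate rounds where $\overline{V_{n-1}''}\cap L_n=\emptyset$ (in which the player simulating $M(R)$-player II has in fact already covered all of $R$ and so wins outright), is the technical heart. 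The remaining bookkeeping --- a coherent choice of lifts of open subsets of $R$ to open subsets of $Z$, and repeated appeals to normality to sandwich compact sets inside the prescribed regions with room for closures --- is routine.
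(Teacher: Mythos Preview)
The paper does not contain its own proof of this statement: it is quoted verbatim as \cite[Theorem 2]{T} and used as a black box in the proof of Theorem~\ref{main}. So there is nothing in the paper to compare your argument against; what you have written is a self-contained reconstruction of Telg\'arsky's result.

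Your reconstruction is essentially correct and follows the natural line: translate moves of $kP(Z,X)$ into moves of $M(R)$ and back, using that a compact $K\subseteq X$ yields the open cover $\{O\cap R:\overline O\cap K=\emptyset\}$ of $R$, while conversely an open cover of $R$ (lifted to $Z$) has compact complement contained in $X$. The bookkeeping with the auxiliary sets $V_n',V_n''$ and the invariant $R\setminus\bigcup_{i\le n}\bigcup\mathcal V_i\subseteq V_n''$ is the right mechanism, and your treatment of the degenerate case $K_n=\emptyset$ (where $K(X)$ consists of nonempty compacta, so this is not a legal move) is adequate once you note that from that round on player~II in $M(R)$ may play arbitrarily.

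One slip to fix: the last sentence of your second construction reads ``a winning strategy of player~I in $kP(Z,X)$ is turned into one for player~I in $M(R)$'', but that implication was already handled in the first construction. What is needed here, and what your ``symmetrically'' clearly intends, is the remaining direction: a winning strategy of player~I in $M(R)$ yields one for player~I in $kP(Z,X)$. The same device does give this --- player~I in $kP$ runs the $M(R)$-strategy internally, converts its covers $\mathcal U_n$ into compact sets $K_n=\overline{V_{n-1}''}\cap L_n$ to play in $kP$, and converts $kP$-player~II's responses $V_n$ into finite subfamilies $\mathcal V_n$ to feed back --- but the sentence as written has the games swapped.
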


Before proceeding to the proof of the main theorem of this section let us make two comments.
\begin{rem}\label{remark1}
When constructing winning strategies for player I in a Porada game
we can always assume that player II picks only basic open sets. Indeed, if $V_i$ is the $i$-th move of player II and $V_i$ is not basic, then instead of
applying her strategy directly to $V_i$, player I can first pick a basic open set $V'_i$ such that the sequence $(V_0,\ldots, V_{i-1}, V'_i)$ is admissible
(cf. section 1.3) and then apply her strategy to this sequence.
\end{rem}
\begin{rem}\label{remark2}
Suppose that $s$ is a winning strategy of player I in either of the games: Porada, $k$-Porada, strong Choquet. There is another winning strategy $s'$ of player I
such that the open set $U_n$ in the pair $s'(V_0,\ldots ,V_{n-1})$ satisfies
$\overline{U}_n\subseteq V_{n-1}$ (i.e. the closure of $U_n$ is contained in $V_{n-1}$).
\end{rem}

\begin{thrm}\label{main}
Let $X$ be a subspace of a compact Hausdorff space $Z$. The following conditions are equivalent:
\begin{enumerate}[(a)]
	\item Player I has a winning strategy in the game $kP(Z,X)$.
	\item Player I has a winning strategy in the game $P(K(Z),K(X))$.
	\item Player I has a winning strategy in the game $kP(K(Z),K(X))$.
	\item $Z\setminus X$ is not Menger.
	\item $K(Z)\setminus K(X)$ is not Menger.
\end{enumerate}
\end{thrm}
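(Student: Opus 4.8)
My plan is to prove the equivalences $(a)\Leftrightarrow(d)$ and $(c)\Leftrightarrow(e)$, the two easy one-way implications $(b)\Rightarrow(c)$ and $(d)\Rightarrow(e)$, and the two substantial implications $(c)\Rightarrow(a)$ and $(c)\Rightarrow(b)$; since $(d)\Rightarrow(e)$ yields $(a)\Rightarrow(c)$, these together give all five equivalences. The equivalences $(a)\Leftrightarrow(d)$ and $(c)\Leftrightarrow(e)$ are immediate from Telgarsky's Theorem~\ref{thrm_Telgarsky}, applied respectively to $X\subseteq Z$ and to $K(X)\subseteq K(Z)$ (here one uses that $K(Z)$, like $Z$, is compact Hausdorff), combined with Hurewicz's game characterization of Mengerness, Theorem~\ref{Menger-charakteryzajca}: player I wins $kP(Z,X)$ iff player I wins $M(Z\setminus X)$ iff $Z\setminus X$ is not Menger, and likewise for the hyperspaces. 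The implication $(b)\Rightarrow(c)$ is the instance of Remark~\ref{uwaga}, $(iii)\Rightarrow(iv)$, for the pair $K(X)\subseteq K(Z)$. For $(d)\Rightarrow(e)$ note that $z\mapsto\{z\}$ is a homeomorphic embedding of $Z$ onto a closed subset of $K(Z)$, which carries $Z\setminus X$ onto a closed subset of $K(Z)\setminus K(X)$; since the Menger property passes to closed subspaces, non-Mengerness of $Z\setminus X$ forces non-Mengerness of $K(Z)\setminus K(X)$.

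For $(c)\Rightarrow(a)$ I would fix a winning strategy $\sigma$ for player I in $kP(K(Z),K(X))$ which, by Remark~\ref{remark2}, satisfies $\overline{\mathcal{U}_{n+1}}\subseteq\mathcal{V}'_n$ (closures in $K(Z)$), where $(\mathcal{K}_n,\mathcal{U}_n)$ is player I's $n$-th move and $\mathcal{V}'_n$ player II's reply, and simulate it inside $kP(Z,X)$. When $\sigma$ outputs $(\mathcal{K}_n,\mathcal{U}_n)$, player I plays $(K_n,U_n)$ with $K_n:=\bigcup\mathcal{K}_n$ and $U_n:=\bigcup\{C:C\in\mathcal{U}_n\}$: by Proposition~\ref{folklor} the set $K_n$ is a compact subset of $X$, while $U_n$ is open in $Z$ (because $\mathcal{U}_n$ is open in $K(Z)$), one has $\mathcal{U}_n\subseteq\langle U_n\rangle$, and $K_n\subseteq U_n$, $U_n\subseteq V_{n-1}$, so the move is legal. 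When player II answers $V_n$ with $K_n\subseteq V_n\subseteq U_n$, player I returns to $\sigma$ the reply $\mathcal{V}'_n:=\mathcal{U}_n\cap\langle V_n\rangle$; this is legal because every member of $\mathcal{K}_n$ lies in $K_n\subseteq V_n$, so $\mathcal{K}_n\subseteq\mathcal{V}'_n\subseteq\mathcal{U}_n$. Since $\sigma$ wins, $\emptyset\neq\bigcap_n\mathcal{U}_n\subseteq K(X)$. Any $D$ in this intersection satisfies $\emptyset\neq D\subseteq\bigcap_n U_n$ (using $\mathcal{U}_n\subseteq\langle U_n\rangle$); and for $z\in\bigcap_n U_n$, $z$ lies in some member of $\mathcal{U}_n$ for each $n$, so the sets $\{C\in\overline{\mathcal{U}_n}:z\in C\}$ form a decreasing sequence of nonempty subsets of $K(Z)$ that are closed (as $\{C:z\in C\}$ is closed in $K(Z)$), hence compact; a point $C^{\ast}$ common to all of them satisfies $z\in C^{\ast}\in\bigcap_n\overline{\mathcal{U}_n}=\bigcap_n\mathcal{U}_n\subseteq K(X)$, so $z\in X$. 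Thus $\bigcap_n U_n$ is a nonempty subset of $X$ and player I wins $kP(Z,X)$.

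The implication $(c)\Rightarrow(b)$ is the heart of the proof and, I expect, the main obstacle. Given a winning strategy $\sigma$ for player I in $kP(K(Z),K(X))$, one wants a winning strategy for player I in $P(K(Z),K(X))$; the natural attempt is to let player I play the point $\mathcal{A}_n:=\bigcup\mathcal{K}_n$ (a compact subset of $X$, by Proposition~\ref{folklor}). The trouble is the passage from one round to the next: by Remark~\ref{remark1} player II's reply in $P(K(Z),K(X))$ may be taken to be a basic Vietoris set $\langle O_1,\dots,O_m\rangle$, and player I's next point must meet each component $O_i$; but the family $\mathcal{K}_{n+1}$ returned by $\sigma$ need not do so, and the constraint ``the union of the family meets each $O_i$'' cannot be encoded as an open set in $K(Z)$, so it cannot simply be handed back to $\sigma$. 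One is forced to enlarge $\mathcal{A}_{n+1}$ by finitely many ``residue'' points of $X$, taken from the preceding point $\mathcal{A}_n$ (which does meet every $O_i$), so as to keep the move legal; but then these residue points need not eventually fall inside the region to which $\sigma$ has confined itself, which jeopardises the winning condition of $P(K(Z),K(X))$. Making this work — presumably by running a finite but branching family of auxiliary copies of $kP(K(Z),K(X))$, localised to the components produced by player II, amalgamating their family-moves by unions via Proposition~\ref{folklor}, and choosing the open sets $\mathcal{W}_n$ played in $P(K(Z),K(X))$ with enough foresight (contained in player II's previous move, containing $\mathcal{A}_n$, and shrinking as in Remark~\ref{remark2}) that the compactness argument of the previous paragraph again yields $\emptyset\neq\bigcap_n\mathcal{W}_n\subseteq K(X)$ — is the delicate bookkeeping that constitutes the core of the argument.
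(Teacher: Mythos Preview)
You have inverted the winning condition in the Porada-type games. In $kP(Z,X)$ (and $P$, $kP$ on the hyperspace) player~II wins when $\emptyset\neq\bigcap_n U_n\subseteq X$; player~I wins precisely when this fails. Your sentence ``Since $\sigma$ wins, $\emptyset\neq\bigcap_n\mathcal{U}_n\subseteq K(X)$'' is therefore the opposite of what $\sigma$'s being a winning strategy for player~I gives you, and the elaborate second half of your $(c)\Rightarrow(a)$ (building $C^{\ast}$ through $z$ to conclude $z\in X$) proves that player~II wins in $kP(Z,X)$, not player~I. The corrected argument is in fact shorter than what you wrote: by the closure nesting and compactness of $K(Z)$ the intersection $\bigcap_n\mathcal{U}_n$ is nonempty, so $\sigma$ winning for~I yields $\bigcap_n\mathcal{U}_n\not\subseteq K(X)$; any $D$ in $\bigcap_n\mathcal{U}_n\setminus K(X)$ satisfies $D\subseteq\bigcap_n U_n$ (since $\mathcal{U}_n\subseteq\langle U_n\rangle$) and $D\not\subseteq X$, hence $\bigcap_n U_n\not\subseteq X$, and player~I wins in $kP(Z,X)$. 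This is exactly the paper's proof of $(c)\Rightarrow(a)$.

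Your struggle with $(c)\Rightarrow(b)$ is self-inflicted: once you have $(c)\Rightarrow(a)$, it suffices to prove $(a)\Rightarrow(b)$, and this is what the paper does, cleanly and without any branching bookkeeping. Starting from a winning strategy $s$ for player~I in $kP(Z,X)$ with moves $(K_n,U_n)$, player~I plays in $P(K(Z),K(X))$ the \emph{point} $K_0\cup\cdots\cup K_n\in K(X)$ together with the basic open set $\langle V^{n-1}_1,\dots,V^{n-1}_{m},U_n\rangle$, where $\langle V^{n-1}_1,\dots,V^{n-1}_{m}\rangle$ was player~II's previous (basic) reply; player~I feeds back to $s$ the union $V_n=(V^{n}_1\cup\cdots\cup V^{n}_{m'})\cap U_n$. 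The ``residue'' problem you describe disappears in this direction: the new point \emph{contains} the old one, which already met every component of II's reply, and the single extra component $U_n$ is met by $K_n$. At the end, $\bigcap_n U_n\in K(Z)\setminus K(X)$ (since $s$ wins for~I), and $\bigcup_n K_n\cup\bigcap_n U_n$ is a compact set lying in every $\langle V^{k}_1,\dots,V^{k}_{m}\rangle$ but not in $K(X)$, so player~I wins $P(K(Z),K(X))$. There is no need for auxiliary branching copies of the game.
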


\begin{proof}
Suppose that $s$ is a winning strategy for player I in $kP(Z,X)$. We shall construct a winning strategy $t$ for
player I in $P(K(Z),K(X))$. Let $t(\emptyset)=(K_0,\la U_0\ra)$ where $(K_0,U_0)=s(\emptyset)$.
Let $V^0$ be the first move of II in $P(K(Z),K(X))$, i.e. the set $V^0$ is open in $K(Z)$ and $K_0\in V^0\subseteq \la U_0\ra$.
Without loss of generality we may assume that $V^0$ is basic, i.e. $V^0=\la V^0_1,\ldots , V^0_{n(0)} \ra$, for some natural number
$n(0)$ and open sets $V^0_i$, $i=1,\ldots, n(0)$, cf. Remark \ref{remark1}.
Let $V_0= V^0_1\cup\ldots \cup V^0_{n(0)}$. Since $K_0\in V^0\subseteq \la U_0\ra$, we have $K_0\subseteq V_0\subseteq U_0$ and hence
$V_0$ is a legal move of player II in the game $kP(Z,X)$.

We have $s(V_0)=(K_1,U_1)$ for some $K_1\in K(X)$ and $U_1$ open in $Z$ with $K_1\subseteq U_1\subseteq\overline{U_1}\subseteq V_0\subseteq U_0$,
cf. Remark \ref{remark2}.
Let
$$t(V^0)=(K_0\cup K_1,\la V^0_1,\ldots , V^0_{n(0)}, U_1 \ra).$$
Now, player II responds by picking a basic open $V^1=\la V^1_1,\ldots , V^1_{n(1)}\ra$ with
$K_0\cup K_1\in V^1\subseteq t(V^0)$.
Let $$V_1= \left(V^1_1\cup\ldots \cup V^1_{n(1)}\right)\cap U_1.$$
Obviously, $K_1\subseteq V_1\subseteq U_1$ which means that $V_1$ is a legal move of player II in $kP(Z,X)$.

We have $s(V_0,V_1)=(K_2,U_2)$ for some $K_2\in K(X)$ and $U_2$ open in $Z$ with $K_2\subseteq U_2\subseteq \overline{U_2}\subseteq V_1$.
Let
$$t(V^0,V^1)=(K_0\cup K_1\cup K_2,\la V^1_1,\ldots , V^1_{n(1)}, U_2 \ra)$$ and so on.

We claim that $t$ constructed in the way described above is a winning strategy for player I in $P(K(Z),K(X))$. To this end, it suffices to show that
\begin{equation}\label{warunek}
\bigcap_k \la V^k_1,\ldots , V^k_{n(k)}\ra\nsubseteq K(X).
\end{equation}
Since $s$ is a winning strategy for I in $kP(Z,X)$, it follows that
$K=\bigcap_n U_n =\bigcap_n \overline{U_n}\in K(Z)\setminus K(X)$.
For every $n$, we have $K_n\subseteq U_n$, so
the set $\bigcup_{n=0}^\infty K_n\cup K$ is compact and does not belong to $K(X)$ (because $K\notin K(X)$).
Moreover,
$$\bigcup_{n=0}^\infty K_n\cup K\in \bigcap_k \la V^k_1,\ldots , V^k_{n(k)}\ra,$$
which proves \eqref{warunek}. This finishes the proof of the implication $(a)\Rightarrow (b)$.

\medskip

The implication $(b)\Rightarrow (c)$ is trivial.

\medskip

Let us show that $(c)\Rightarrow (a)$.
We need to fix some notation first. In each round of the game $kP(K(Z),K(X))$, player I picks a compact subset of the hyperspace $K(X)$ together with an open
neighborhood $U\subseteq K(Z)$ of it. We will denote the $i$-th move of I
by $(\KK_i,U^i)$, i.e. the script letter  $\KK_i$ will stand for a compact subset of the hyperspace $K(X)$. Note that by Proposition \ref{folklor}, $\bigcup \KK_i$
is a compact subset of $X$.
It is not difficult to see that if player I has a winning
strategy in $kP(K(Z), K(X))$ she also has a winning strategy in this game where open sets $U^i$ are finite unions of basic open sets.
Indeed it suffices to replace $U^i$ by an arbitrary
union of basic sets containing $\KK_i$ and contained in $U^i$.
Therefore, without loss of generality we may assume that always
$$U^i=\bigcup_{j=1}^{m(i)}\la U^i_{j1},\ldots , U^i_{js(j)}\ra,$$
for some natural numbers $m(i),s(1), s(2) ,\ldots, s(m(i))$.
By $U_i$ we will denote the union
\begin{equation}\label{suma}
U_i=\bigcup_{j=1}^{m(i)}\bigcup_{k=1}^{s(j)}U^i_{jk}.
\end{equation}

In his $i$-th move, player II picks an open neighborhood $V^i$ of $\KK_i$ in $K(Z)$ such that $V^i\subseteq U^i$. It is easy to observe that without loss of
generality we may assume that
\begin{equation}\label{domkniecie}
\overline{V^i}\subseteq U^i
\end{equation}
too (where the closure is taken in $Z$).

\medskip

Now, suppose that
$s$ is a winning strategy of player I in the game $kP(K(Z),K(X))$. We shall construct a winning strategy $t$ for player I in the game $kP(Z,X)$.

First, we define $t(\emptyset)=\left(\bigcup \KK_0,U_0 \right)$, where $(\KK_0,U^0)=s(\emptyset)$. It follows from \eqref{suma} and Proposition \ref{folklor}
that $t(\emptyset)$ defined in this way is a legal move of player I in the game $kP(Z,X)$.
Let $V_0$ be the first move of II in $kP(Z,X)$, i.e.
$V_0$ is an open set in $Z$ containing $\bigcup \KK_0$ and contained in $U_0$.
Let $V^0=\la V_0\ra \cap U^0$. Clearly, $\KK_0\subseteq V^0\subseteq U^0$ and $V^0$ is an open set in $K(Z)$.
We let
$$t(V_0)=\left(\bigcup \KK_1,U_1\right)\text{, where} \quad (\KK_1,U^1)=s(V^0) \quad\text{ (cf. \eqref{suma}).}$$
Player II responds by picking and open set $V_1\subseteq Z$ such that $\bigcup \KK_1\subseteq V_1\subseteq U_1$.
We let $V^1=\la V_1 \ra\cap U^1$ and define
$$t(V_0, V_1)=\left(\bigcup \KK_2,U_2\right)\text{, where} \quad (\KK_2,U^2)=sV_0, V^1).$$
The game continues following this pattern.

The space $K(Z)$ is compact so by \eqref{domkniecie} we have $\bigcap_{n=0}^{\infty} U^n=\bigcap_{n=0}^{\infty} \overline{V^n}\neq \emptyset$.
This, together with the fact that
$s$ is a winning strategy of player I in $kP(K(Z),K(X))$, gives
$$\bigcap_{n=0}^{\infty} U^n\nsubseteq K(X).$$
Hence, there is a compact set $C\in K(Z)\setminus K(X)$ such that
$C\subseteq \bigcap_{n=0}^{\infty} U^n$. In particular, $C\in \bigcap_{n=0}^\infty U_n$ and thus
$\bigcap_{n=0}^\infty U_n\nsubseteq X$. So player I wins the game $kP(Z,X)$ if playing according to $t$.

\medskip

We have proved $(a)\Leftrightarrow (b)\Leftrightarrow (c)$.
The equivalences $(a)\Leftrightarrow (d)$ and $(c)\Leftrightarrow (e)$ follow directly from Theorems \ref{thrm_Telgarsky} and \ref{Menger-charakteryzajca}.
\end{proof}

Now we can easily conclude the result established by Gartside Medini and Zdomskyy in \cite{GMZ}, cf. Corollary \ref{main1}.

\begin{cor}\label{maincor}
Consider the following properties of a topological space $X$:
\begin{enumerate}[(A)]
 
 \item $X$ is Menger at infinity.
 \item $K(X)$ is hereditarily Baire.
\end{enumerate}
Then if $K(X)$ is first-countable and regular, then  $(A)\Rightarrow (B)$. If $X$ is separable metrizable then
$(A)\Leftrightarrow (B)$
\end{cor}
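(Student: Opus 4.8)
The plan is to derive the corollary from Theorem~\ref{main} together with the characterization of hereditary Baireness by the strong Choquet game (Theorem~\ref{thrm:characterization}), the only extra ingredient being an elementary ``trace'' transfer of winning strategies. Since $(A)$ presupposes that $X$ admits a compactification, we assume $X$ is Tychonoff and fix a compactification $Z$ of $X$; in the second part we take $Z$ to be metrizable, e.g.\ the closure of $X$ in the Hilbert cube. Then $K(Z)$ is compact Hausdorff, the Vietoris topology on $K(X)$ is the one inherited from $K(Z)$, and $K(X)$ is dense in $K(Z)$ (a nonempty basic open set $\langle U_0,\ldots,U_n\rangle$ of $K(Z)$ contains $\{x_0,\ldots,x_n\}$ for arbitrary $x_i\in U_i\cap X$), so $K(Z)$ is a compactification of $K(X)$ and Theorem~\ref{main} applies to the pair $X\subseteq Z$. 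I will also use the following transfer lemma: \emph{for any spaces $Y\subseteq W$, if player I has a winning strategy in $Ch(Y)$, then player I has a winning strategy in $Ch(W,Y)$.} It is proved by a routine trace argument: given a winning strategy $\sigma$ of I in $Ch(Y)$, player I plays in $Ch(W,Y)$ a simulated run of $Ch(Y)$, where whenever $\sigma$ prescribes an open set $U'\subseteq Y$ she plays the open set $\widetilde U\cap V$ of $W$ (with $\widetilde U$ open in $W$, $\widetilde U\cap Y=U'$, and $V$ the last move of II, vacuously $W$ in the first round), and feeds $\sigma$ the traces on $Y$ of the moves of II; then every move $U_n$ of I in $Ch(W,Y)$ satisfies $U_n\cap Y=U_n'$, so $(\bigcap_n U_n)\cap Y=\bigcap_n U_n'=\emptyset$ and I wins.

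\textbf{$(A)\Rightarrow(B)$ when $K(X)$ is first-countable and regular.} Assume $Z\setminus X$ is Menger. By the equivalence of $(b)$ and $(d)$ in Theorem~\ref{main}, player I has no winning strategy in $P(K(Z),K(X))$. Hence, by Remark~\ref{uwaga} applied to the pair $K(X)\subseteq K(Z)$ (the implication $(ii)\Rightarrow(iii)$, read contrapositively), player I has no winning strategy in $Ch(K(Z),K(X))$, and then, by the transfer lemma, no winning strategy in $Ch(K(X))$. Since $K(X)$ is first-countable and regular, Theorem~\ref{thrm:characterization} gives that $K(X)$ is hereditarily Baire, i.e.\ $(B)$.

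\textbf{$(A)\Leftrightarrow(B)$ when $X$ is separable metrizable.} Now $K(X)$ is separable metrizable, in particular first-countable and regular, so $(A)\Rightarrow(B)$ is covered by the previous paragraph. For $(B)\Rightarrow(A)$, take $Z$ to be a metrizable compactification of $X$; then $K(Z)$ is a compact metrizable space containing the metrizable space $K(X)$, so by the fact recalled just before Theorem~\ref{thrm_Telgarsky} (applied to $K(X)\subseteq K(Z)$) the games $Ch(K(X))$, $Ch(K(Z),K(X))$ and $P(K(Z),K(X))$ are equivalent for both players. If $(B)$ holds, then by Theorem~\ref{thrm:characterization} player I has no winning strategy in $Ch(K(X))$, hence none in $P(K(Z),K(X))$, hence, by the equivalence of $(b)$ and $(d)$ in Theorem~\ref{main}, $Z\setminus X$ is Menger; as $Z$ is a compactification of $X$, this is precisely $(A)$.

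The bulk of the content is already carried by Theorem~\ref{main}, and assembling the corollary is essentially bookkeeping with game transfers, so I do not expect a serious obstacle. The single point needing its own argument is the trace lemma, and it is needed only because in the first part $K(Z)$ is merely compact Hausdorff, so the equivalence of $Ch(K(X))$, $Ch(K(Z),K(X))$ and $P(K(Z),K(X))$ used in the metrizable case is unavailable there; note that we only invoke its easy direction, namely that a winning strategy of player I in $Ch(K(X))$ produces one in $Ch(K(Z),K(X))$, the reverse transfer being liable to fail without metrizability.
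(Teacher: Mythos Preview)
Your proof is correct and follows essentially the same route as the paper's: deduce from Theorem~\ref{main} that player~I has no winning strategy in $P(K(Z),K(X))$, climb back up via Remark~\ref{uwaga} to the strong Choquet game, and invoke Theorem~\ref{thrm:characterization}; for the converse in the separable metrizable case, use the equivalence of $Ch(K(X))$, $Ch(K(Z),K(X))$ and $P(K(Z),K(X))$ when $K(Z)$ is compact metric. The only difference is that you make explicit the trace transfer from $Ch(K(X))$ to $Ch(K(Z),K(X))$, which the paper folds into the phrase ``It follows from Remark~\ref{uwaga} and Theorem~\ref{thrm:characterization}''; your care here is appropriate, since that step is precisely what is not automatic when $K(Z)$ is merely compact Hausdorff.
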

\begin{proof}
Suppose that $K(X)$ is first-countable regular and $bX$ is a compactification of $X$ with $bX\setminus X$ Menger. By Theorem \ref{main}, player I has no winning
strategy in the Porada game $P(K(bX),K(X))$. It follows from Remark \ref{uwaga} and Theorem \ref{thrm:characterization} that $K(X)$ is hereditarily Baire.
To prove the second part, let us suppose that $X$ is separable metrizable and $K(X)$ is hereditarily Baire. Let $Z$ be a metric compactification of $X$.
It is known and easy to verify that the games
$Ch(X)$, $Ch(Z,X)$ and
$P(Z,X)$ are equivalent in the sense that player $Y$ has a winning strategy in one of the games if and only if player $Y$ has a winning strategy in all of the games.
Thus, combining Theorem \ref{thrm:characterization} and Theorem \ref{main} we conclude that $Z\setminus X$ is Menger.
\end{proof}

\begin{cor}\label{maincor2}
There exists (in ZFC) a non-Polish space $X\subseteq [0,1]$ with $K(X)$ hereditarily Baire.
\end{cor}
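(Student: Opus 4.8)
The plan is to deduce this immediately from Corollary \ref{maincor} together with the ZFC existence of a Menger subspace of the reals that is not $\sigma$-compact, already alluded to in Section \ref{Menger} (citing \cite{FM}, \cite{BT}).

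First I would record the relevant description of ``Polish'' inside $[0,1]$: a separable metrizable space $X\subseteq[0,1]$ is completely metrizable (i.e.\ Polish) if and only if $X$ is $G_\delta$ in $[0,1]$, equivalently $[0,1]\setminus X$ is $F_\sigma$ in $[0,1]$, equivalently $[0,1]\setminus X$ is $\sigma$-compact (an $F_\sigma$ subset of a compact metric space is $\sigma$-compact, and conversely every $\sigma$-compact subset of a metric space is $F_\sigma$). Next I would observe that, for such $X$, the hyperspace $K(X)$ is hereditarily Baire if and only if $[0,1]\setminus X$ is Menger. Indeed, $\overline{X}$ (the closure of $X$ in $[0,1]$) is a metric compactification of $X$ with $\overline{X}\setminus X$ closed in $[0,1]\setminus X$, while $[0,1]\setminus\overline{X}$ is open in $[0,1]$ and hence $\sigma$-compact; since the Menger property is inherited by closed subspaces and preserved by finite unions, $\overline{X}\setminus X$ is Menger if and only if $[0,1]\setminus X = (\overline{X}\setminus X)\cup([0,1]\setminus\overline{X})$ is Menger. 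Now Corollary \ref{maincor} (the equivalence $(A)\Leftrightarrow(B)$, which applies since $X$ is separable metrizable) gives the claimed equivalence.

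Then I would simply take, in ZFC, a subspace $Y$ that is Menger but not $\sigma$-compact; such $Y$ exists by \cite{FM}, \cite{BT}, and may be assumed to be a subspace of the Cantor set $\{0,1\}^{\mathbb{N}}$ (or of $\omega^\omega$), hence, via a topological embedding into $[0,1]$ and the fact that Mengerness and $\sigma$-compactness are topological invariants, a subspace $Y\subseteq[0,1]$. Set $X:=[0,1]\setminus Y$. Then $[0,1]\setminus X=Y$ is Menger, so $K(X)$ is hereditarily Baire by the previous step; and $[0,1]\setminus X=Y$ is not $\sigma$-compact, so $X$ is not $G_\delta$ in $[0,1]$, i.e.\ $X$ is not Polish. (Note $X\neq\emptyset$: since $[0,1]$ itself is $\sigma$-compact, $Y\neq[0,1]$.)

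The one point requiring a word of care — though it is entirely routine — is realizing the Menger, non-$\sigma$-compact witness inside $[0,1]$; this causes no trouble because the standard ZFC constructions already produce such a space as a subset of $\{0,1\}^{\mathbb{N}}$ or $\omega^\omega$, both of which embed into $[0,1]$. I do not expect any genuine obstacle: essentially all of the content is carried by Corollary \ref{maincor} and the cited set-theoretic constructions, and what remains is the elementary bookkeeping above relating ``$G_\delta$ in $[0,1]$'', ``co-$\sigma$-compact'', and ``Menger at infinity''.
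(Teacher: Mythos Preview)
Your proposal is correct and follows essentially the same approach as the paper: take a Menger, non-$\sigma$-compact $M\subseteq[0,1]$ (citing \cite{FM}, \cite{BT}) and set $X=[0,1]\setminus M$, so that $X$ is not $G_\delta$ (hence not Polish) while $K(X)$ is hereditarily Baire by Theorem~\ref{main}/Corollary~\ref{maincor}. The only difference is that you spell out in detail the equivalence between ``Polish'' and ``co-$\sigma$-compact'' and the passage from $[0,1]$ to the compactification $\overline{X}$, whereas the paper simply cites Theorem~\ref{main} (which already applies to any compact $Z\supseteq X$, not just compactifications) and leaves these routine points implicit.
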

\begin{proof}
Let $X=[0,1]\setminus M$, where $M$ is a Menger space which is not $\sigma$-compact \cite{FM}, \cite{BT}.
Then $X$ is not Polish and $K(X)$ is hereditarily Baire by
Theorem \ref{main}
\end{proof}

M.~Sakai noticed (see \cite[Theorem 2.9]{AB}) that if a first-countable Tychonoff space is Menger at infinity, then it must be hereditarily Baire. It was not clear
however whether any (not necessarily first-countable) Menger at infinity space must be Baire \cite[Question 2.8]{AB}. The affirmative answer to this question
follows immediately from Theorem \ref{main}. We have:

\begin{cor}
For any Tychonoff space $X$ if $X$ is Menger at infinity, then $X$ is Baire. 
\end{cor}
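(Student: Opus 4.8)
The plan is to derive this purely formally from Oxtoby's theorem, Theorem~\ref{main}, and the chain of implications in Remark~\ref{uwaga}; indeed, this is precisely the ``immediate'' consequence advertised in the introduction. If $X=\emptyset$ there is nothing to prove, so I would assume $X\neq\emptyset$. Being Tychonoff, $X$ has a Hausdorff compactification; I would fix one and call it $Z$ (for definiteness one may take $Z=\beta X$). Then $Z$ is compact Hausdorff with $X\subseteq Z$, so Theorem~\ref{main} applies to the pair $(Z,X)$. Since ``Menger at infinity'' does not depend on the chosen compactification (the Menger property being invariant under images and preimages by perfect maps), the hypothesis tells us that the remainder $Z\setminus X$ is Menger.

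Next I would read off what this means game-theoretically. By the equivalence $(a)\Leftrightarrow(d)$ of Theorem~\ref{main}, player I has a winning strategy in the $k$-Porada game $kP(Z,X)$ if and only if $Z\setminus X$ fails to be Menger. As $Z\setminus X$ is Menger, I conclude that player I has no winning strategy in $kP(Z,X)$.

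Finally I would argue by contradiction to obtain Baireness. Suppose $X$ is not Baire. Then, by Oxtoby's Theorem~\ref{Oxtoby}, player I has a winning strategy in the Banach-Mazur game $BM(X)$. By Remark~\ref{uwaga}, applied to $X\subseteq Z$ (the implications $(i)\Rightarrow(ii)\Rightarrow(iii)\Rightarrow(iv)$), player I then also has a winning strategy in $kP(Z,X)$, contradicting the conclusion of the previous paragraph. Hence $X$ is Baire, as required.

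I do not foresee a genuine obstacle: all the substantive work is already packaged into Theorem~\ref{main} and the elementary Remark~\ref{uwaga}. The only point deserving a word of care is the passage to a compact \emph{Hausdorff} ambient space --- which is exactly where I use that $X$ is Tychonoff (so that $\beta X$, or some Hausdorff compactification, exists) together with the compactification-independence of ``Menger at infinity'' (so that the remainder of that particular compactification may be taken to be Menger).
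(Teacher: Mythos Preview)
Your proposal is correct and follows essentially the same route as the paper's proof: fix a Hausdorff compactification, invoke Theorem~\ref{main} to conclude that player~I has no winning strategy in $kP(Z,X)$, and then use Remark~\ref{uwaga} together with Oxtoby's Theorem~\ref{Oxtoby} to deduce that $X$ is Baire. The only differences are cosmetic (you argue by contradiction and add a remark on compactification-independence and the empty case), not substantive.
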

\begin{proof}
Let $bX$ be a compactification of $X$ with Menger remainder $bX\setminus X$. From Theorem \ref{main} we infer that player I has no winning strategy in the $k$-Porada
game $kP(bX,X)$. Hence, by Remark \ref{uwaga} player I has no winning strategy in the Banach-Mazur game $BM(X)$. It follows from Theorem \ref{Oxtoby} that $X$ is
Baire.
\end{proof}

\section{Filters}

An interesting class of metric spaces is formed by filters on the naturals. Recall that a family $\FF$ of subsets of $\mathbb{N}$ is called \textit{filter}
if it satisfies the following conditions:
\begin{enumerate}
 
 \item $\emptyset\notin \FF$ and $\NN\in \FF$
 \item if $A\in \FF$ and $A\subseteq B$ then $B\in \FF$
 \item if $A,B\in \FF$ then $A\cap B\in \FF$
 
\end{enumerate}
We will consider only free filters, i.e. filters containing all cofinite subsets of $\mathbb{N}$. If $\FF\subseteq P(\NN)$ is a filter, the collection
$\FF^\ast=\{A\subseteq \NN:\NN\setminus A\in \FF\}$ is its dual ideal and $\FF^+=P(\NN)\setminus \FF^\ast$ is the family of $\FF$-positive sets.

Identifying a set with its characteristic function, we treat a filter $\FF$ as a subspace of the Cantor set $2^\NN$. Filters correspond to countable spaces with
precisely one non-isolated point: Given a filter $\FF$ on $\mathbb{N}$, by $N_\FF$ we denote the space $\NN\cup\{\infty\}$ where
points of $\NN$ are isolated and the family
$\{A\cup\{\infty\}:A\in\FF\}$ is a
neighborhood base at $\infty$.

Recall that a filter $\FF$ on $\NN$ is a \textit{$P$-filter} if for every decreasing sequence $(A_n)_{n\in\omega}$ of sets from $\FF$, there exists $A\in\FF$
such that $A$ is almost contained in each $A_n$, i.e. for every $n\in\omega$ the set $A\setminus A_n$ is finite. $P$-ultrafilters are called
\textit{$P$-points}. A filter is \textit{(non)meager} is it is
(non)meager as a subset of the Cantor set $2^\mathbb{N}$.

Since $\FF$ is a metric space contained naturally in the compact metric space $2^\NN$, one may ask whether it is possible to characterize
filters $\FF$ for which $K(\FF)$ is hereditarily Baire. This question is motivated by the following result of Marciszewski \cite[Theorem 1.2]{Marciszewski},
that gives a characterization of hereditarily Baire filters.

\begin{thrm}(Marciszewski)\label{Marciszewski}
Let $\FF$ be a filter on $\NN$. The following conditions are equivalent.
\begin{enumerate}
\item $\FF$ is a nonmeager $P$-filter
\item $\FF$ is hereditarily Baire
\item $C_p(N_\FF)$ is hereditarily Baire
\end{enumerate}
\end{thrm}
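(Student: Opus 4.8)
The plan is to establish the cycle $(1)\Rightarrow(2)\Rightarrow(3)\Rightarrow(1)$, with the implication $(2)\Rightarrow(1)$ being the one that carries the real weight. For the easy direction, $(1)\Rightarrow(2)$: suppose $\FF$ is a nonmeager $P$-filter, and let $\mathcal{D}$ be a countable decreasing sequence of nonempty open dense subsets of a closed set $C\subseteq\FF$; I would use the $P$-filter property to diagonalize through the sets witnessing density, producing an element of $\bigcap\mathcal{D}$ inside $C$. Concretely, one invokes the characterization of nonmeager $P$-filters via a Hurewicz-type/unbounded-sequence game on $\FF$: a filter is a nonmeager $P$-filter precisely when, in the natural Banach-Mazur-style game where player II builds an increasing sequence of finite approximations to a set together with finite ``promises'' drawn from $\FF$, player II has a winning strategy. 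Running this strategy against a play that codes the dense sets $\mathcal{D}$ yields the required point of $\bigcap\mathcal{D}\cap C$.

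For $(2)\Rightarrow(3)$, I would pass through the standard duality between $\FF$ (as a subspace of $2^\NN$) and the function space $C_p(N_\FF)$. The key observation is that $C_p(N_\FF)$ is homeomorphic (or at least closely related by a Baire-category-preserving correspondence) to a space built from $\FF$ and $\NN$: a continuous function on $N_\FF$ is determined by its values on the isolated points $\NN$ together with the requirement that these values converge to $f(\infty)$ along every set in $\FF$. I would cite or reprove the fact that $C_p(N_\FF)$ is hereditarily Baire if and only if $\FF$ is, using that both conditions are equivalent to the absence of a closed copy of $\mathbb{Q}$ (Hurewicz's Theorem \ref{thrm_Hurewicz}) and that a closed copy of $\mathbb{Q}$ in one space can be transferred to the other via the correspondence. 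Alternatively, one argues that hereditary Baireness of $C_p(N_\FF)$ forces $N_\FF$ to have no infinite discrete $C$-embedded subset accumulating badly, which translates back to a property of $\FF$.

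The main obstacle is the implication $(2)\Rightarrow(1)$, i.e.\ showing that a hereditarily Baire filter is automatically a nonmeager $P$-filter. That $\FF$ is nonmeager is immediate, since a meager filter is, by the Talagrand/Jalali-Naini characterization, partitioned by an interval partition into pieces each meeting every element of $\FF$, and this quickly produces a closed copy of $\mathbb{Q}$ (or a closed non-Baire subspace) inside $\FF$. The delicate part is the $P$-filter property: given a decreasing sequence $(A_n)$ in $\FF$ with no pseudo-intersection in $\FF$, I must manufacture a closed non-Baire subspace of $\FF$. The idea is to consider the set $C$ of all $B\in\FF$ such that $B\subseteq^{*} A_n$ fails to be improvable — more precisely, one looks at $\FF$ restricted to the ``trace'' determined by the $A_n$'s and shows that the failure of a pseudo-intersection makes this trace homeomorphic to a space in which the rationals embed as a closed subset, contradicting $(2)$ via Hurewicz. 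Making this closed subspace genuinely closed in $\FF$ and genuinely homeomorphic to (or containing a closed copy of) $\mathbb{Q}$ is the technical heart; one typically builds it as a countable union of finite ``levels'' indexed so that the non-isolated structure replicates $\mathbb{Q}$, exploiting that $\bigcap_n A_n$ need not lie in $\FF$. I expect this to require a careful combinatorial construction of the embedding rather than a soft argument, and it is here that the nonmeagerness is used in tandem with the $P$-failure to pin down the local structure.
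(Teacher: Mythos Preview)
The paper does not prove this theorem at all: it is quoted verbatim from Marciszewski's paper \cite[Theorem~1.2]{Marciszewski} and used as motivation for Theorem~\ref{filters1}. There is therefore no ``paper's own proof'' to compare your attempt against.

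That said, your proposal has a structural inconsistency and several genuine gaps. You announce the cycle $(1)\Rightarrow(2)\Rightarrow(3)\Rightarrow(1)$ but then spend the bulk of the write-up on $(2)\Rightarrow(1)$, which is not part of that cycle; presumably you meant either $(3)\Rightarrow(1)$ or a different scheme such as $(1)\Leftrightarrow(2)$ together with $(2)\Leftrightarrow(3)$. More seriously, your treatment of $(2)\Rightarrow(3)$ is circular: you propose to ``cite or reprove the fact that $C_p(N_\FF)$ is hereditarily Baire if and only if $\FF$ is,'' but that equivalence is precisely the content of $(2)\Leftrightarrow(3)$. The relationship between $\FF$ and $C_p(N_\FF)$ is not a simple homeomorphism, and transferring a closed copy of $\mathbb{Q}$ from one to the other is the substance of Marciszewski's argument, not a preliminary remark. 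Finally, your sketch of $(2)\Rightarrow(1)$ correctly identifies the Talagrand--Jalali-Naini characterization for nonmeagerness and the need to build a closed copy of $\mathbb{Q}$ from a failed pseudo-intersection, but you explicitly concede that the construction is ``the technical heart'' and do not carry it out; as written this is an outline of where the difficulty lies rather than a proof.
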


The theorem above not only gives a description of hereditarily Baire filters in terms of a property of filters, but also sheds some light on hereditary Baireness
of function spaces. Recall that for a Tychonoff space $X$, by $C_p(X)$ we denote the space of all continues real valued functions on $X$ endowed with the
topology of pointwise convergence.
It is known that the Baire property of $C_p(X)$ can be characterized in terms of the topology of $X$. However, no such characterization is known
for hereditarily Baire
$C_p(X)$. Theorem \ref{Marciszewski} gives a complete characterization of hereditarily Baire function spaces over countable spaces with one non-isolated point.

Let us remark that it is still not clear
if there exists in ZFC a non-discrete countable space $X$ for which $C_p(X)$ is hereditarily Baire. There are however (in ZFC) uncountable spaces of that sort.
It follows from Proposition in Section 3.1 of \cite{CP} that if $\Gamma$ is an uncountable set then the space $C_p(\Gamma_\FF)$ is hereditarily Baire, where $\FF$
is the filter of co-countable subsets of $\Gamma$.

The following notion was introduced by Laflamme in \cite{La} for ultrafilters.
\begin{defin}
A filter $\FF\subseteq P(\NN)$ is called a \textit{strong $P$-filter} if for any sequence $(\CC_{n})_{n\in\NN}$ of compact subsets of $\FF$, there is a sequence
$0=k_0<k_1<k_2<\ldots$ of naturals, such that
$$\bigcup_{n\in\NN}\left(X_n\cap [k_n,k_{n+1})\right)\in\FF,$$
whenever $X_n\in \CC_n$, for each $n\in\NN$
\end{defin}
A strong $P$-ultrafilter is called a \textit{strong $P$-point}.
It seems that strong $P$-filters were never studied in the literature; Guzm\'{a}n et al. \cite{GHM}, Blass et al. \cite{BHV} considered a
similar notion of strong $P^+$-filters.
It is not clear whether strong $P$-filters exist in ZFC. Observe that a strong $P$-filter $\FF$ is a $P$-filter. To see this, take a decreasing sequence $(A_n)$
of sets from $\FF$. Applying the definition of strong $P$-filter to the sequence of singletons $\{A_n\}$ one gets a sequence
$0=k_0<k_1<k_2<\ldots$ of naturals such that the set
$$A=\bigcup_{n\in\NN}\left(A_n\cap [k_n,k_{n+1})\right)$$
belongs to $\FF$. Now note that $A\setminus A_n \subseteq [0,k_n)$ because the sequence $(A_n)_{n\in \omega}$ is decreasing, and hence $A$
is almost contained in each $A_n$.
Combining the next theorem with Theorem \ref{Marciszewski} we infer that strong $P$-filters are also nonmeager. It is known (see \cite{BHV}) that consistently,
there are $P$-points (i.e. $P$-ultrafilters) which are not strong $P$-points.

We will prove the following counterpart of Theorem \ref{Marciszewski}:

\begin{thrm}\label{filters1}
Let $\FF$ be a filter on $\NN$. The following conditions are equivalent.
\begin{enumerate}
\item $\FF$ is a strong-P-filter
\item $K(\FF)$ is hereditarily Baire
\item $K(C_p(N_\FF))$ is hereditarily Baire
\end{enumerate}
\end{thrm}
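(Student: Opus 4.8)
The plan is to prove the cycle $(2)\Leftrightarrow(1)$ directly using the game-theoretic machinery of Theorem \ref{main}, and then to derive $(2)\Leftrightarrow(3)$ from $(2)$ together with known facts about the structure of $C_p(N_\FF)$. For the equivalence $(1)\Leftrightarrow(2)$, the key observation is that $\FF$ sits inside the compact metrizable space $2^\NN$, so by Theorem \ref{main} (with $Z=2^\NN$ and $X=\FF$) and Theorem \ref{thrm:characterization}, $K(\FF)$ is hereditarily Baire if and only if player I has no winning strategy in $kP(2^\NN,\FF)$, equivalently (by Theorems \ref{thrm_Telgarsky} and \ref{Menger-charakteryzajca}) if and only if $2^\NN\setminus\FF$ is Menger. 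So it suffices to show that $\FF$ is a strong $P$-filter if and only if $2^\NN\setminus\FF$ is Menger. For the direction $(1)\Rightarrow(2)$: given a sequence of open covers $(\UU_n)$ of $2^\NN\setminus\FF$, I would refine each to a cover by basic clopen cylinders; using compactness of $2^\NN$, finitely many of these cylinders together with finitely many ``filter-type'' cylinders cover $2^\NN$, and the complement of those filter-type cylinders inside $\FF$ is a compact subset $\CC_n$ of $\FF$ — this is the standard way compact subsets of a filter arise. Feeding the sequence $(\CC_n)$ into the strong $P$-filter property yields a partition of $\NN$ into intervals $[k_n,k_{n+1})$ such that any ``diagonal splice'' $\bigcup_n(X_n\cap[k_n,k_{n+1}))$ with $X_n\in\CC_n$ lies in $\FF$; the contrapositive says that every point of $2^\NN\setminus\FF$ is, on some interval $[k_n,k_{n+1})$, forced into one of the finitely many cylinders chosen at stage $n$, which produces the desired finite subcovers $\VV_n\subseteq\UU_n$ with $\bigcup_n\VV_n\supseteq 2^\NN\setminus\FF$.

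For the reverse direction $(2)\Rightarrow(1)$, i.e. $2^\NN\setminus\FF$ Menger implies $\FF$ is a strong $P$-filter, I would start from an arbitrary sequence $(\CC_n)$ of compact subsets of $\FF$ and build, for each $n$ and each finite stage $k$, an open cover of $2^\NN\setminus\FF$ whose members record the ``possible behaviours on $[0,k)$'' that are incompatible with membership in some $X_n\in\CC_n$; the precise bookkeeping here is where one has to be careful, because the intervals $[k_n,k_{n+1})$ have to be chosen coherently and increasingly. Applying the Menger property to this sequence of covers yields finite subfamilies whose union covers $2^\NN\setminus\FF$; the ``depths'' of the cylinders appearing in these finite subfamilies determine the cut points $k_1<k_2<\cdots$, and one checks that this choice makes every splice $\bigcup_n(X_n\cap[k_n,k_{n+1}))$ land in $\FF$, since a hypothetical splice in $2^\NN\setminus\FF$ would be covered, hence would be pinned down on some interval, contradicting the choice of the $\CC_n$ as the ``bad sets.'' I expect this direction, and in particular synchronizing the interval endpoints with the finitely many cylinders returned by the Menger player, to be the main technical obstacle; the rest is a matter of translating between the combinatorics of splices in $2^\NN$ and finite subcovers.

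Finally, for $(2)\Leftrightarrow(3)$, I would use the fact that $N_\FF$ is a countable space with a single non-isolated point, so $C_p(N_\FF)$ is (linearly homeomorphic to, or at least nicely related to) a subspace of $\RR^\NN$ determined by $\FF$; more concretely, one exploits that $C_p(N_\FF)$ and $\FF$ are tied together by standard $C_p$-theory constructions (as in the proof of Marciszewski's Theorem \ref{Marciszewski}), so that hereditary Baireness transfers between $K(\FF)$ and $K(C_p(N_\FF))$ — typically via a closed embedding in one direction and a retraction or a perfect map in the other, which preserve (or reflect) hereditary Baireness. If a direct transfer proves awkward, an alternative is to run the same game argument as above but with the compact metric space $\RR^\NN$ (or a suitable compactification of $C_p(N_\FF)$) in place of $2^\NN$, reducing $(3)$ to a Menger-at-infinity statement and then matching it up with the strong $P$-filter condition through the correspondence between covers of the remainder of $C_p(N_\FF)$ and covers of the remainder of $\FF$. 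Either way, the content of $(3)$ beyond $(2)$ is routine once the equivalence $(1)\Leftrightarrow(2)$ is in hand.
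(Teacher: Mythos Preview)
Your plan for $(1)\Rightarrow(2)$ matches the paper's: the paper also sets $\CC_n=P(\NN)\setminus\bigcup\UU_n$, applies the strong $P$-filter property to the sequence $(\CC_n)$, and reads off the finite subfamilies $\VV_n$ from the cylinders over the intervals $[k_n,k_{n+1})$.

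For $(2)\Rightarrow(1)$ you propose a genuinely different route: work directly with the Menger covering property of $2^\NN\setminus\FF$ and extract the $k_n$ from the depths of the cylinders in the finite subfamilies returned. The synchronization problem you anticipate is real and more serious than your sketch suggests: the Menger property hands back all the $\VV_n$ simultaneously, but whether a splice $\bigcup_n(X_n\cap[k_n,k_{n+1}))$ is caught by a cylinder in $\VV_m$ depends on $X_0,\ldots,X_m$ jointly on $[0,k_{m+1})$, not on $X_m$ alone, so a naive choice of covers does not yield the contradiction. The paper avoids this by working instead with the game $kP(P(\NN),\FF)$: player~I's strategy plays at stage $n$ the compact set $\KK_n$ of all elements of $\FF$ that agree with earlier choices on $[0,k_n)$ and with some member of $\CC_n$ on $[k_n,\infty)$; player~II's response $V_n$ then determines $k_{n+1}$, and a run defeating this strategy (which exists by Theorem~\ref{main}, since $K(\FF)$ is hereditarily Baire) yields the required sequence $(k_n)$ directly. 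The game formulation buys exactly the sequential, adaptive bookkeeping that your direct Menger approach would have to simulate by hand.

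Your treatment of $(2)\Leftrightarrow(3)$ has a gap. The paper does not transfer hereditary Baireness between $K(\FF)$ and $K(C_p(N_\FF))$ via closed embeddings, retractions, or perfect maps; hereditary Baireness of a hyperspace is not obviously preserved or reflected by such maps between the base spaces. Instead the paper applies Corollary~\ref{maincor} on each side to convert $(2)\Leftrightarrow(3)$ into the statement ``$\FF$ is Menger at infinity iff $C_p(N_\FF)$ is Menger at infinity,'' and then cites this as a theorem of Bella and Hern\'{a}ndez-Guti\'{e}rrez \cite{BH-G}. That result is not routine $C_p$-theory, and your sketch gives no indication of how to recover it; calling this step ``routine once $(1)\Leftrightarrow(2)$ is in hand'' underestimates what is required.
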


The equivalence $(2)\Leftrightarrow (3)$ follows from Corollary \ref{maincor} and a recent result of Bella and Hern\'{a}ndez-Guti\'{e}rrez who proved in \cite{BH-G}
that a filter $\FF$ is Menger at infinity if and only if the space $C_p(N_\FF)$ is Menger at infinity. Thus to prove Theorem \ref{filters1} it remains to show:
\begin{thrm}\label{filters} Let $\FF$ be a filter on $\mathbb{N}$.
The following conditions are equivalent.
\begin{enumerate}
    \item $\FF$ is a strong $P$-filter
    \item $\FF$ is Menger at infinity
    \item $K(\FF)$ is hereditarily Baire
    \item $\FF^+$ is Menger
\end{enumerate}
\end{thrm}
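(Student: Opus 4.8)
The plan is to reduce everything to the single equivalence ``$\FF$ is a strong $P$-filter $\iff$ $2^\NN\setminus\FF$ is Menger''. First, $\FF$ is a separable metrizable space (a subspace of $2^\NN$), so $(2)\Leftrightarrow(3)$ is Corollary \ref{maincor}. For $(2)\Leftrightarrow(4)$, note that a free filter contains all cofinite sets and these are dense in $2^\NN$, so $2^\NN$ is a metric compactification of $\FF$; hence condition $(2)$ says exactly that $Y:=2^\NN\setminus\FF$ is Menger. Since the complementation map $A\mapsto\NN\setminus A$ is an autohomeomorphism of $2^\NN$ taking $\FF$ onto $\FF^\ast$, it takes $Y$ onto $2^\NN\setminus\FF^\ast=\FF^+$, so $Y$ is Menger iff $\FF^+$ is Menger, i.e. $(2)\Leftrightarrow(4)$. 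For what remains I would use the following reformulation of $(1)$: writing $[s]$ for the basic clopen set determined by $s\in 2^{<\omega}$ and, for $\CC\subseteq 2^\NN$ compact and $I$ an interval of $\NN$, $\CC\restriction I=\{X\cap I:X\in\CC\}$, given compact $\CC_n\subseteq\FF$ and an interval partition $0=k_0<k_1<\dots$ with $I_n=[k_n,k_{n+1})$ the ``mixed'' set $\CC(\bar k):=\{\bigcup_n(X_n\cap I_n):X_n\in\CC_n\}$ is a continuous image of $\prod_n\CC_n$, hence compact, and a short check gives $\CC(\bar k)=\{y:y\restriction I_n\in\CC_n\restriction I_n\text{ for all }n\}$; thus $\FF$ is a strong $P$-filter iff for every such $(\CC_n)$ there is $\bar k$ with $\CC(\bar k)\subseteq\FF$. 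I would also record the elementary fact that a free filter is closed under finite modifications.

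For $(1)\Rightarrow(2)$, let $(\UU_n)_n$ be open covers of $Y$ and put $K_n:=2^\NN\setminus\bigcup\{[s]:[s]\subseteq U\text{ for some }U\in\UU_n\}$; since $\UU_n$ covers $Y$, $K_n$ is a compact subset of $\FF$ (if some $K_n=\emptyset$ one finishes at once by compactness of $2^\NN$, so assume all $K_n\ne\emptyset$). Applying the strong $P$-filter property to $(K_n)$ yields $\bar k$; set $\mathcal{W}_n:=\{[s]:s\in 2^{k_{n+1}},\ [s]\cap K_n=\emptyset\}$, a finite family. If some $y\in Y$ avoided $\bigcup_n\bigcup\mathcal{W}_n$, then for every $n$ the string $y\restriction k_{n+1}$ would extend to some $X_n\in K_n$, so $X_n\cap I_n=y\cap I_n$ and $\bigcup_n(X_n\cap I_n)=y$; the strong $P$-filter property would then force $y\in\FF$, contradicting $y\in Y$. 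Hence $\bigcup_n\bigcup\mathcal{W}_n\supseteq Y$. Finally each $[s]\in\mathcal{W}_n$ is a compact subset of $2^\NN\setminus K_n=\bigcup\{[t]:[t]\subseteq U\text{ for some }U\in\UU_n\}$, so it is covered by finitely many members of $\UU_n$; collecting these gives finite $\VV_n\subseteq\UU_n$ with $\bigcup_n\bigcup\VV_n\supseteq Y$, and $Y$ is Menger.

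For $(2)\Rightarrow(1)$ I would argue by contraposition: assuming $\FF$ is not a strong $P$-filter I construct a winning strategy for player I in the Menger game $M(Y)$, which by Theorem \ref{Menger-charakteryzajca} shows $Y$ is not Menger. It is convenient to play $M(Y)$ in the equivalent ``reduced'' form in which player I's move is a nonempty compact $K\subseteq\FF$ (coding the clopen cover $\{[s]:[s]\cap K=\emptyset\}$ of $Y$), player II's answer is a clopen $H\supseteq K$ (coding the finite subcover with union $2^\NN\setminus H$), and player I wins iff $\bigcap_nH_n\nsubseteq\FF$; one checks directly that a winning strategy there gives a winning strategy in $M(Y)$. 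Fix $(\CC_n)$ nonempty compact in $\FF$ with $\CC(\bar k)\nsubseteq\FF$ for \emph{every} interval partition $\bar k$. Player I builds $\bar k$ adaptively: having fixed $k_0<\dots<k_n$ she plays $$K_n=\{y\in 2^\NN:\ y\restriction I_j\in\CC_j\restriction I_j\ \text{for}\ j<n,\ \text{and}\ y\restriction[k_n,\infty)\in\CC_n\restriction[k_n,\infty)\},$$ which is compact and, since each of its members differs from a member of $\CC_n\subseteq\FF$ only on the finite set $[0,k_n)$, is contained in $\FF$; when player II answers with a clopen $H_n\supseteq K_n$ determined by the coordinates $<l_n$, player I sets $k_{n+1}:=\max(l_n,k_n+1)$. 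The key point is $\CC(\bar k)\subseteq\bigcap_nH_n$: given $y\in\CC(\bar k)$ and $n$, choose $X_n\in\CC_n$ with $X_n\restriction I_n=y\restriction I_n$ and let $y'$ agree with $y$ on $[0,k_n)$ and with $X_n$ on $[k_n,\infty)$; then $y'\in K_n\subseteq H_n$ and $y'\restriction k_{n+1}=y\restriction k_{n+1}$, so $y\in H_n$. Therefore $\bigcap_nH_n\supseteq\CC(\bar k)\nsubseteq\FF$, so player I wins.

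I expect all the real difficulty to sit in the last step. The delicate part is the bookkeeping that lets player I choose the interval partition $\bar k$ adaptively so that each $H_n$ is already ``resolved'' inside $[0,k_{n+1})$ while the mixed set $\CC(\bar k)$ still lies inside $\bigcap_nH_n$; the two devices that make this go through are (i) playing the compact set $K_n$ so that it is \emph{unconstrained} on the finite block $[0,k_n)$ and constrained precisely on the tail $[k_n,\infty)$ by $\CC_n$ --- this is legitimate exactly because a free filter is closed under finite modifications --- and (ii) deferring the choice of $k_{n+1}$ past the level $l_n$ of player II's response. The ingredients I would isolate as short lemmas or remarks are: the compactness of $\CC(\bar k)$ together with its two descriptions; closure of free filters under finite modifications; the reduction of an arbitrary open cover of $Y$ to a clopen cover of the form $\{[s]:[s]\cap K=\emptyset\}$; and the equivalence between the reduced game above and $M(Y)$.
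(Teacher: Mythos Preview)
Your proof is correct and follows essentially the same architecture as the paper's: the easy equivalences $(2)\Leftrightarrow(3)\Leftrightarrow(4)$ via Corollary~\ref{maincor} and the complementation homeomorphism are handled identically, and your $(1)\Rightarrow(2)$ argument is the same combinatorial argument as the paper's (you pass through a clopen refinement first, the paper takes complements directly, but the compact sets and the role of the interval partition are the same).

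The one genuine difference is in the hard direction. The paper proves $(3)\Rightarrow(1)$ directly: it fixes a sequence $(\CC_n)$ (assumed WLOG increasing), builds a strategy for player~I in the $k$-Porada game $kP(P(\NN),\FF)$ whose moves encode partial interval partitions, and then invokes Theorem~\ref{main} to say this strategy is \emph{not} winning, extracting the good $\bar k$ from a run that player~I loses. You instead prove $\neg(1)\Rightarrow\neg(2)$ by contraposition: fixing a bad $(\CC_n)$, you build a \emph{winning} strategy for player~I in (a reduced form of) the Menger game $M(2^\NN\setminus\FF)$ and invoke Theorem~\ref{Menger-charakteryzajca}. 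Since the $k$-Porada game and the Menger game are equivalent via Telg\'arsky's Theorem~\ref{thrm_Telgarsky}, the two routes are dual to one another; but your contrapositive formulation is arguably a bit cleaner --- you avoid the WLOG-increasing reduction, and your ``reduced game'' (compact $K\subseteq\FF$ versus clopen $H\supseteq K$, winning condition $\bigcap_n H_n\nsubseteq\FF$) makes the translation between strategies and interval partitions more transparent. The paper's route, on the other hand, keeps the whole argument inside the $k$-Porada framework already developed in Theorem~\ref{main}, so no separate reduction of $M(Y)$ to the clopen game is needed. Both are fine; the bookkeeping device you isolate --- playing $K_n$ unconstrained on $[0,k_n)$ and deferring $k_{n+1}$ past the level $l_n$ of player~II's response --- is exactly the mechanism hidden inside the paper's construction of the sets $\KK_n$ and $W_n$.
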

\begin{proof}
It follows from Corollary \ref{maincor} that $(2)\Leftrightarrow (3)$. Since $\FF^+$ is homeomorphic to $P(\NN)\setminus \FF$ we get $(2)\Leftrightarrow (4)$.
Thus it it enough to show
$(1)\Rightarrow (2)$ and $(3)\Rightarrow (1)$.
To prove that $(1)\Rightarrow (2)$, let us suppose that a filter $\FF$ is strong $P$.
Let $(\mathcal{U}_n)_{n\in \NN}$ be a sequence of open covers of $P(\NN)\setminus \FF$. For each $n\in \NN$ let 
$$\CC_n=P(\NN)\setminus \bigcup\UU_n.$$
Note that $\CC_n$ is compact and $\CC_n\subseteq \FF$ because the family $\UU_n$ covers $P(\NN)\setminus \FF$.

Since $\FF$ is strong $P$, there is a sequence $k_0<k_1<k_2<\ldots$ of naturals such that
\begin{equation}\label{ast}
\text{if $X_n\in \CC_n$ for $n\in \NN$, then } \bigcup_n \left(X_n\cap[k_n,k_{n+1})\right)\in \FF  \tag{*}
\end{equation}

For $n\in\NN$ and $s\subseteq[k_n,k_{n+1})$, let 
\begin{equation}\label{astast}
U_s=\{A\in P(\NN):A\cap[k_n,k_{n+1})=s\}\tag{**}
\end{equation}

be the basic open set in $P(\NN)$ given by $s$.
For each $n\in \NN$ we set
$$W_n=\bigcup\{U_s:s\subseteq [k_n,k_{n+1})\text{ and } U_s\cap\CC_n\neq\emptyset\}.$$
The set $W_n$ is an open (in $P(\NN)$) neighborhood of $\CC_n$ and hence $P(\NN)\setminus W_n$ is a compact subset of $\bigcup \mathcal{U}_n$. It follows that there is a finite $\mathcal{V}_n\subseteq \mathcal{U}_n$ with $\bigcup\mathcal{V}_n\supseteq P(\NN)\setminus W_n$.

We claim that the family $\bigcup_{n\in \NN}\mathcal{V}_n$ covers $P(\NN)\setminus\FF$. Indeed, otherwise there is
$X\in P(\NN)\setminus\FF$ such that $X\notin \bigcup \mathcal{V}_n$, for every $n\in \NN$. This means that
$X\in W_n$, for every $n\in\NN$, so for some $s(n)\subseteq [k_n,k_{n+1})$ we have
$X\in U_{s(n)}$ and $U_{s(n)}\cap\CC_n\neq\emptyset$.
This implies that for every $n\in \NN$, there exists $X_n\in \CC_n$ such that $X_n\cap[k_n,k_{n+1})=s(n)=X\cap[k_n,k_{n+1})$.
We get $X=\bigcup\left(X_n\cap[k_n,k_{n+1})\right)\notin \FF$, contradicting \eqref{ast}.
This finishes the proof of $(1)\Rightarrow (2)$.

Suppose now that $K(\FF)$ is hereditarily Baire and fix a sequence $(\CC_n)_{n\in\NN}$ of compact subsets of $\FF$.
Without loss of generality we may assume that this sequence of compacta is increasing.
We shall define a strategy $\sigma$ for player I in the game $kP(P(\NN),\FF)$ together with a sequence $k^\sigma_0<k^\sigma_1<\ldots$ of natural numbers,
depending on the play according to $\sigma$,
in the following way:

We set $\sigma(\emptyset)=(\CC_0,P(\NN))$ and $k^\sigma_0=0$. Player II  replies and plays with a set $V_0$ open in $P(\NN)$ such that $\CC_0\subseteq V_0$.
Since $\CC_0$ is compact there are:
\begin{itemize}
    \item A natural number $k_1^\sigma>k_0^\sigma$,
    \item a finite set $F_0$ and
    \item sets $s^0_i\subseteq[k^\sigma_0,k^\sigma_1)$, for every $i\in F_0$
\end{itemize}
such that
$$W_0=\bigcup_{i\in F_0}U_{s^0_i}\subseteq V_0,\quad \CC_0\subseteq \bigcup_{i\in F_0} U_{s^0_i}\quad\text{and}\quad U_{s^0_i}\cap \CC_0\neq\emptyset \quad\text{for every } i\in F_0,$$
where $U_{s^0_i}$ is a basic open set in $P(\NN)$ given by $s^0_i$, cf. \eqref{astast}.
Let $$\KK_1=\{X\in \FF: X\in W_0\; \text{and}\; (\exists Y\in \CC_1)\; Y\cap[k_1^\sigma,\infty)=X\cap[k^\sigma_1,\infty)\}.$$
It is straightforward to check that $\KK_1$ is a nonempty (because $\CC_0\subseteq \CC_1$) compact subset of $\FF$.
We define $\sigma(V_0)=(\KK_1,W_0).$
Now, player II picks an open set $V_1$ such that $K_1\subseteq V_1\subseteq W_0$. As before,
there is a natural number $k^\sigma_2>k^\sigma_1$, a finite set $F_1$ and sets $s^1_i\subseteq [k^\sigma_1,k^\sigma_2)$, for $i\in F_1$, such that
$$W_1=W_0\cap \bigcup_{i\in F_1}U_{s^1_i}\subseteq V_1,\quad
\KK_1\subseteq W_1\quad\text{and}\quad U_{s^1_i}\cap \CC_1\neq\emptyset \quad\text{for every } i\in F_1,$$
where $U_{s^1_i}$ is a basic open set in $P(\NN)$ given by $s^1_i$. We set
$$\KK_2=\{X\in\FF: X\in W_1 \text{ and }(\exists Y\in \CC_2)\; Y\cap[k_2^\sigma,\infty)=X\cap[k_2,\infty)\}.$$
Again, this is a nonempty compact subset of $\FF$ so we can put
$\sigma(V_0,V_1)=(\KK_2,W_1)$. The game follows this pattern. In that way, we define a strategy $\sigma$ for player I.
By our assumption $K(\FF)$ is hereditarily Baire and hence $\sigma$ is not a winning strategy (cf. Theorem \ref{main}), i.e. there is a play 
$$\sigma(\emptyset),\;V_0,\; \sigma(V_0),\;V_1,\;\sigma(V_0,V_1),\;V_2,\;\ldots$$
in which player I applies her strategy and fails, so $\bigcap W_n\subseteq \FF$. In addition, this play generates a sequence $0=k_0<k_1<k_2<\ldots$
of natural numbers.
Let us show that this sequence must satisfy condition \eqref{ast}. For $n\in \NN$, take $X_n\in \CC_n$. It follows from definition of sets
$W_n$ that
$$\bigcup_n \left(X_n\cap[k_n,k_{n+1})\right)\in \bigcap_n W_n\subseteq \FF $$ and thus we have \eqref{ast}.
\end{proof}

\section{Spaces of measures}

For a topological space $X$, by $P(X)$ we denote the space of all Borel probability measures on $X$ endowed with the weak topology, i.e.
basic neighborhoods of $\mu\in P(X)$ are of the form
$$V_\mu(f_1,\ldots,f_n;\varepsilon)=\{\nu\in P(X):\left| \int f_i d\mu-\int f_i d\nu \right|<\varepsilon,\;i=1,\ldots,n\},$$
where $\varepsilon>0$ and $f_1,\ldots,f_n$ are real-valued bounded continuous functions on $X$.

A measure $\mu\in P(X)$ is called \textit{Radon} if for each Borel set $B\subseteq X$ and every $\varepsilon>0$ there is a compact set $K\subseteq B$
satisfying $\mu(B\setminus K)<\varepsilon$. The subspace of $P(X)$ consisting of all Radon measures will be denoted by $P_r(X)$.

It is known that $X$ is separable metrizable if and only if $P_r(X)$ is separable metrizable if and only if $P(X)$ is separable metrizable. Similarly,
$X$ is Polish if and only if $P_r(X)$ is Polish if and only if $P(X)$ is Polish. It is not clear however when $P_r(X)$ or $P(X)$ is (hereditarily) Baire.
Hereditary Baireness and other completeness type properties of spaces of measures on separable metrizable spaces were investigated, e.g. in Brown \cite{B} and
Brown \& Cox \cite{BC}, where several interesting examples are given -- some of them under additional set-theoretic assumptions.
We contribute to this line of research by adding the hyperspace to the picture
and proving the following:

\begin{thrm}\label{measures}
Let $X$ be a separable metrizable space. If the hyperspace $K(X)$ is hereditarily Baire, then
the space $P_r(X)$ is hereditarily Baire too, and thus $P(X)$ is Baire.
\end{thrm}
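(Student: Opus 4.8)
The plan is to establish hereditary Baireness of $P_r(X)$ by exhibiting a winning strategy for player II in the strong Choquet game $Ch(Z, P_r(X))$, where $Z$ is a metric compactification of $P_r(X)$ (for instance, one may take $Z = P_r(bX)$ for a metric compactification $bX$ of $X$, since $P_r(bX) = P(bX)$ is compact metrizable when $bX$ is). By Theorem \ref{thrm:characterization} and the equivalence of the games $Ch$, $Ch(Z,\cdot)$, $P(Z,\cdot)$ recalled in the proof of Corollary \ref{maincor}, it suffices to show player I has no winning strategy in $Ch(Z, P_r(X))$; and since we are given that $K(X)$ is hereditarily Baire, Theorem \ref{main} tells us player I has no winning strategy in $kP(bX, X)$, equivalently the remainder $bX \setminus X$ is Menger, equivalently — by Theorems \ref{thrm_Telgarsky} and \ref{Menger-charakteryzajca} — player II has a winning "Menger-type" strategy. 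The strategy for the measure game will be built on top of this.

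**The core idea** is tightness control: the crucial feature of a Radon measure $\mu \in P_r(X)$ is that for every $\varepsilon > 0$ there is a compact $K \subseteq X$ with $\mu(X \setminus K) < \varepsilon$; equivalently, viewed in $P(bX)$, $\mu$ gives full measure to $X$, so $\mu(bX \setminus X) = 0$ and $\mu$ "avoids" the remainder in the sense that $\inf\{\mu(V) : V \supseteq bX\setminus X \text{ open in } bX\} = 0$. When player I plays a pair $(\mu_n, \mathcal{U}_n)$ in the Choquet game with values in $P_r(X)$, player II must respond with an open $V_n \ni \mu_n$ so that the decreasing intersection of the $\mathcal{U}_n$'s meets $P_r(X)$, i.e. contains a measure concentrated on $X$. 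The plan is to have player II simultaneously run, as an auxiliary process, the $k$-Porada game $kP(bX, X)$: from the approximating measures $\mu_n$ and the weak-topology neighborhoods, player II extracts compact sets $K_n \subseteq X$ capturing all but $2^{-n}$ of the mass of $\mu_n$, feeds these as moves of player I in $kP(bX,X)$ (this is why the $k$-Porada game, with compact sets, is the right auxiliary game — not the ordinary Porada game), and uses the winning strategy of player II there to shepherd the remainder mass. Shrinking the weak neighborhoods at stage $n$ to also force the candidate limit measure to put mass $> 1 - 2^{-n}$ on $K_n$, one arranges that any measure $\mu$ in $\bigcap_n \mathcal{U}_n$ satisfies $\mu(bX \setminus X) = 0$, hence $\mu \in P(X)$, and in fact $\mu$ is Radon because it is inner regular by compacts along the sequence $K_n$ and more.

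**The main obstacle**, which I expect to be the technical heart of the argument, is the translation layer: converting weak-topology neighborhoods of a measure into genuine open sets of $bX$ (and compact subsets of $X$) in a way that is both legal as a move in $kP(bX,X)$ and tight enough that a limit measure is forced to concentrate on $X$. Concretely, given $\mu_n$ and $\varepsilon$, one picks a compact $K_n \subseteq X$ with $\mu_n(K_n) > 1-\varepsilon$ and an open $U_n \supseteq K_n$ in $bX$ with $\overline{U_n} \cap$ (previous data) controlled, then uses Urysohn functions $f$ with $f = 1$ on $K_n$, $\operatorname{supp} f \subseteq U_n$ to define a weak neighborhood $V_{\mu}(f; \delta)$ of $\mu_n$; any $\nu$ in this neighborhood has $\nu(U_n) \geq \int f\, d\nu > \int f\, d\mu_n - \delta > 1 - \varepsilon - \delta$. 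Iterating and intersecting, a measure $\mu \in \bigcap_n \mathcal{U}_n$ then satisfies $\mu(U_n) > 1 - 2^{-n}$ for a sequence of open sets $U_n$ whose "essential parts" shrink toward $X$ along player II's winning play in $kP(bX,X)$; since that play is winning for II, $\emptyset \neq \bigcap U_n^{(k\text{-Porada})} \subseteq X$, which combined with the mass estimates yields $\mu(bX \setminus X) = 0$. One must check carefully that $\mu$ restricted to $X$ is Radon — this follows since $X$ is separable metrizable and $\mu$ is a finite Borel measure on $X$ that is tight, every finite Borel measure on a separable metrizable space being regular, and tightness giving the inner compact approximation. Finally, $P_r(X)$ hereditarily Baire gives $P_r(X)$ Baire; and since $P_r(X)$ is dense in $P(X)$ (finitely supported measures are Radon and weakly dense) and, for separable metrizable $X$, a space with a dense Baire subspace is itself Baire, we conclude $P(X)$ is Baire, completing the proof.
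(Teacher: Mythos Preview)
Your plan has a genuine gap at the point where you write ``equivalently --- by Theorems \ref{thrm_Telgarsky} and \ref{Menger-charakteryzajca} --- player II has a winning `Menger-type' strategy.'' Theorem \ref{Menger-charakteryzajca} says only that $bX\setminus X$ is Menger iff player I has \emph{no} winning strategy in the Menger game; it does not give player II a winning strategy. These games are not determined, and in fact for separable metrizable spaces the existence of a winning strategy for player II in the Menger game on $Y$ is equivalent to $Y$ being $\sigma$-compact. So your construction, which feeds player I's moves in $kP(bX,X)$ into a presumed winning strategy for player II, only goes through when $bX\setminus X$ is $\sigma$-compact, i.e.\ when $X$ is Polish --- the trivial case. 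The translation layer you describe (Urysohn functions, tightness control) is fine as far as it goes, but without an actual strategy for player II to plug into, there is nothing to iterate.

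The paper avoids this trap by working in the opposite direction and using Hurewicz's characterization (Theorem \ref{thrm_Hurewicz}) rather than building a strategy for player II directly. Assuming a closed copy $Q$ of $\mathbb{Q}$ in $P_r(X)$, the paper constructs, for each $n$, a strategy $\sigma_n$ for player \emph{I} in $kP(Z,X)$: the moves $(K_k,U_k)$ are produced from Radon measures in a dense-in-itself subset $M_{n-1}\subseteq Q$, using exactly the tightness idea you sketched. Since $K(X)$ is hereditarily Baire, each $\sigma_n$ is \emph{not} winning, so there is a play along which player II succeeds; that play yields a compact $C_n=\bigcap_k \overline{V_k}\subseteq X$ with $\nu(C_n)>1-2^{-n}$ for all $\nu$ in a refined set $M_n\subseteq M_{n-1}$. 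Accumulating a sequence $\lambda_n\in M_n$ in $P(Z)$ produces a measure $\lambda$ with $\lambda(\bigcup_n C_n)=1$, hence $\lambda\in P_r(X)$, yet $\lambda\in\overline{Q}\setminus Q$ --- a contradiction. The key structural point is that ``player I has no winning strategy'' is used once per level $n$ to extract the single compact set $C_n$; no global strategy for player II is ever invoked. To repair your approach you would need to run the contrapositive (turn a putative winning strategy for player I in $Ch(P(bX),P_r(X))$ into one for player I in $kP(bX,X)$), which requires a more delicate bookkeeping of the escaping $2^{-n}$ mass than a single auxiliary play can provide.
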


\begin{proof}
As usual, $2^{<\omega}$ is the set of all finite $0-1$ sequences. For $s\in 2^{<\omega}$, by $|s|$ we denote the length of $s$. If $k\in \NN$ then
$2^{\leq k}=\{s\in 2^{<\omega}:|s|\leq k\}$ and $2^k=\{s\in 2^{<\omega}:|s|=k\}$.

Let $Z$ be a metric compactification of $X$. The space $P(Z)=P_r(Z)$ of probability measures on $Z$ is metrizable and hence there is a
metric $d(\cdot,\cdot)$ on $P(Z)$ that generates the topology.
The space $P_r(X)$ is the subspace of $P(Z)$ consisting of all measures $\mu\in P(Z)$ for which $Z\setminus X$ has $\mu$-measure zero,
i.e. $\mu(B)=0$, for some Borel set $B\supseteq Z\setminus X$.

Striving for a contradiction, suppose that $Q=\{ \mu_1,\mu_2,\ldots \}$ is a closed copy of the rationals in $P_r(X)$, cf. Theorem \ref{thrm_Hurewicz}.
Recursively, for each $i\geq 0$, we construct:
\begin{itemize}
    \item a strategy $\sigma_i$ for player I in the $k$-Porada game $kP(Z,X)$,
    \item a set of measures $M_i\subseteq Q$
    \item a set $W_i$ open in $P(Z)$,
    \item a compact set $C_i\subseteq X$
    \end{itemize}
in such a way that, for every $i\geq 1$ the following conditions hold:
\begin{enumerate}
\item $M_i$ is dense-in-itself
\item $M_{i-1}\supseteq M_{i}$
\item $M_i\subseteq W_i$
\item $\{\mu_1,\ldots , \mu_i\}\cap \overline{W}_i=\emptyset$, where the closure is taken in the space $P(Z)$.
\item $W_{i-1}\supseteq \overline{W}_{i}$, where the closure is taken in the space $P(Z)$.
\item $(\forall \mu \in M_i)\;\;\mu(C_i)\geq 1-\frac{1}{2^{i}}$,
\end{enumerate}
Let $M_0=Q$, $W_0=P(Z)$ and $C_0=\emptyset$. Let $\sigma_0$ be arbitrary.
Fix $n\geq 1$ and suppose that $M_{n-1}$ and $W_{n-1}$ are already defined, and conditions (1)--(6) hold for all $i\leq n-1$.
We shall construct a strategy $\sigma_n$ along with the sets $M_n, W_n$ and $C_n$ in such a way that conditions (1)-(6) shall remain true for $i=n$.
To this end, pick two distinct measures $\nu_0,\nu_1\in M_{n-1}\setminus \{\mu_1,\ldots ,\mu_n\}$
(this is possible since by (1) the set $M_{n-1}$ has no isolated points).
Let $W_n$ be an open set in $P(Z)$ containing $\{\nu_0,\nu_1\}$ and satisfying
$$\overline{W}_n\subseteq W_{n-1} \quad\text{and}\quad \overline{W}_n\cap\{\mu_1,\ldots ,\mu_n\}=\emptyset.$$
(such set exists because $\nu_0,\nu_1\in M_{n-1}\setminus \{\mu_1,\ldots ,\mu_n\}\subseteq W_{n-1}$, by (3)).

Now, we shall inductively construct a strategy $\sigma_n$ along with a set of measures $$M_n=\{\nu_s\in M_{n-1}:s\in 2^{<\omega}\}.$$
Fix $k\geq 1$ and
suppose that the strategy $\sigma_n$ is defined up to the $(k-1)$-st move, i.e. we have a sequence
$$V_0=Z,
\;(K_1,U_1),\; V_1,\; (K_2,U_2),\;V_2, \ldots ,\;(K_{k-1},U_{k-1}),\;V_{k-1}$$
and a set of measures
$$M_n^k=\{\nu_s\in M_{n-1}:s\in 2^{<\omega}, 1\leq |s|\leq k\}$$
where
\begin{enumerate}[(i)]
    \item $K_j\subseteq V_j\subseteq U_j$, for every $1\leq j\leq k-1$;
    \item $K_{j+1}\subseteq U_j\subseteq \overline{U}_j\subseteq V_j$, for every $1\leq j\leq k-1$;
    \item $K_j\subseteq K_{j+1}$, for every $1\leq j\leq k-2$;
    \item For every $t\in 2^{k-1}$ we have $\nu_{t^\frown 0}=\nu_t$ and $\nu_{t^\frown 1}\neq\nu_t$ with $d(\nu_{t^\frown 1},\nu_t)<\frac{1}{2^{k-1}}$;
    \item $\nu_s(K_j)>1-2^{-(n+1)}-\ldots - 2^{-(n+j)}$ for every $1\leq j\leq k-1$ and $s\in 2^{\leq j+1}$.
\end{enumerate}
Let $$\widetilde{V}_{k-1}=\{\mu\in P(Z):\mu(V_{k-1})>1-2^{-(n+1)}-\ldots - 2^{-(n+k-1)}\}\cap W_n.$$
Note that if $s\in 2^{\leq k}$, then 
$\nu_s\in \widetilde{V}_{k-1}$
(this follows from (v) if $k>1$; for $k=1$ this is clear because $V_0=Z$).

For each $s\in 2^{k}$ we define
\begin{align*}
&\nu_{s^\frown 0}=\nu_s\quad\text{and}\\
&\nu_{s^\frown 1}\in M_{n-1}\cap \widetilde{V}_{k-1},\;\nu_{s^\frown 1}\neq\nu_s \text{ is an arbitrary
measure with } d(\nu_{s^\frown 1},\nu_s)<\tfrac{1}{2^{k}}
\end{align*}
(we can pick such $\nu_{s^\frown 1}$ since $\widetilde{V}_{k-1}$ is an open neighborhood of $\nu_s\in M_{n-1}$ and
$M_{n-1}$ has no isolated points by (1)).

Now, let $K_{k}\subseteq X\cap V_{k-1}$ be a compact set such that
$$K_{k-1}\subseteq K_k\quad\text{and}\quad \nu_s(K_k)>1-2^{-{n+1}}-\ldots-2^{-(n+k-1)}-2^{-(n+k)},$$
for every $s\in 2^{[k+1]}$.

Let $U_k$ be an open set in $Z$ satisfying 
  $$K_k\subseteq U_k\subseteq \overline{U}_k\subseteq V_{k-1}$$

We set
$$\sigma_n(V_0,\ldots, V_{k-1})=(K_k,U_k)\quad\text{and}\quad M_{n}^{k+1}=\{\nu_s:s\in 2^{<\omega}, 1\leq |s|\leq k+1\}$$
It is straightforward to check that conditions (i)-(v) hold. This ends the inductive construction of the strategy $\sigma_n$ and the set $$M_n=
\bigcup_{k=1}^\infty M^k_n.$$
By our assumption the space $K(X)$ is hereditarily Baire and hence player I cannot win if playing according to the strategy $\sigma_n$.
Since $V_k\subseteq U_k\subseteq \overline{U}_k\subseteq V_{k-1}$, by (ii), we must have
$$\emptyset\neq\bigcap V_k=\bigcap\overline{V}_k\subseteq X.$$
We define
$$C_n=\bigcap V_k.$$
It is clear that conditions (2)--(5) remain true for $i=n$ and condition (1) follows from (iv). Let us show (6).

We have $M_n=\{\nu_s:s\in 2^{<\omega}\}$. Fix $s\in 2^{<\omega}$ and let $m=|s|$.
By (v), $$\nu_s(K_m)>1-2^{-(n+1)}-\ldots - 2^{-(n+m)}>1-\frac{1}{2^n}.$$
Since $K_1\subseteq K_2\subseteq\ldots$ (by (iii)) we get $K_m\subseteq \bigcup_{k=1}^\infty K_k\subseteq \bigcap V_k=C_n$, so $\nu_s(C_n)>1-\frac{1}{2^n}$, as promised.
This finishes the construction of sets $M_n$, $W_n$ and $C_n$.

Let $\lambda_1,\lambda_2,\ldots$ be a one-to-one sequence of measures such that $\lambda_n\in M_n$, for every $n\geq1$. Let $\lambda$ be a complete accumulation point of $\{\lambda_n:n\geq 1\}$ in $P(Z)$ (it exists by compactness of $P(Z)$). Since $\lambda_n\in M_n$, it follows from (2) and (6) that $$\lambda_n (C_i)\geq 1-\frac{1}{2^i},\quad \text{for every }n\geq i$$
and hence $\lambda(\bigcup C_i)=1$. But $\bigcup C_i\subseteq X$ thus $\lambda\in P_r(X)$.
On the other hand, $\lambda_n\in M_n\subseteq W_n$ and hence it follows from (4) and (5) that
$\lambda\notin Q$. Therefore $\lambda\in \overline{Q}\setminus Q\subseteq P(Z)\setminus P_r(X)$
(where the last inclusion follows form the assumption that $Q$ is closed in $P_r(X)$). This yields a contradiction.
\end{proof}

We do not know if analogous result is true for the property of Baire.

\begin{question}\label{q1}
Let $X$ be metrizable and suppose that $K(X)$ is Baire. Is it true that $P_r(X)$ is Baire?
\end{question}

Since $P_r(X)$ is a dense subspace of $P(X)$, Baireness of $P_r(X)$ implies Baireness of $P(X)$. The property of Baire in hyperspaces $K(X)$ of all compact and
$CL(X)$ of all closed subsets of a topological space $X$,
has been a subject of intense study, see
e.g. McCoy \cite{MC}, Cao \& Tomita \cite{CT}, Cao et al. \cite{CGG}.

One can ask a more general question.
\begin{question}
Let $X$ be metrizable and suppose that $CL(X)$ is Baire. Is it true that $P(X)$ is Baire?
\end{question}
The affirmative answer would be a generalization of the main result of \cite{CGG} for metrizable spaces. Indeed, W\'ojcicka proved in \cite{W} that
$X^n$ is Baire for
every $n$, provided $X$ is metrizable and $P(X)$ is Baire.

The class of spaces with hereditarily Baire spaces of measures, and hence by Theorem \ref{measures}, also the class of spaces with hereditarily Baire
hyperspaces is somewhat related with the important class of Prohorov spaces.
Recall that a topological space $X$ is \textit{Prohorov} if every compact set in $P_r(X)$ is uniformly tight, i.e. for every compact $C\subseteq P_r(X)$ and every
$\varepsilon>0$, there is a compact set $K\subseteq X$ such that $\mu(K)>1-\varepsilon$, for every $\mu\in C$.
The following observation is known.
\begin{prop}
If a metrizable space $X$ is Prohorov, then $P_r(X)$ is hereditarily Baire.
\end{prop}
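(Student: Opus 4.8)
The plan is to assemble two facts that are already quoted in the introduction together with one routine permanence property. The facts are: if $X$ is Prohorov then $P_r(X)$ is Prohorov (\cite{W1}, \cite{Bo}); and the combination of Preiss's theorem that $\mathbb{Q}$ is not Prohorov with Hurewicz's Theorem~\ref{thrm_Hurewicz}. The permanence property I would use is that \emph{a closed subspace of a Prohorov space is Prohorov}. Granting these, the argument is short: let $X$ be (separable) metrizable and Prohorov, so $P_r(X)$ is Prohorov and, since $X$ is separable metrizable, $P_r(X)$ is separable metrizable as well. If $P_r(X)$ were not hereditarily Baire, Hurewicz's Theorem~\ref{thrm_Hurewicz} would give a closed copy $Q\subseteq P_r(X)$ of $\mathbb{Q}$; being a closed subspace of the Prohorov space $P_r(X)$, $Q$ would be Prohorov, so $\mathbb{Q}$ would be Prohorov, contradicting Preiss's theorem. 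Hence $P_r(X)$ is hereditarily Baire.

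The only thing that genuinely has to be proved is the permanence property, and this is where I expect the (routine) effort to go. Given a Prohorov space $Y$ and a closed set $F\subseteq Y$, I would identify $P_r(F)$ with the subset $\{\nu\in P_r(Y):\nu(F)=1\}$ of $P_r(Y)$ via extension by zero, $\mu\mapsto\widetilde\mu$ with $\widetilde\mu(B)=\mu(B\cap F)$. This subset is \emph{closed} in $P_r(Y)$ because $\nu\mapsto\nu(F)$ is upper semicontinuous for closed $F$ (Portmanteau), and the identification is a \emph{topological} embedding because, using the Tietze extension theorem and normality of $Y$, the integrals of bounded continuous functions on $F$ and the integrals of bounded continuous functions on $Y$ control one another along this map. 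Consequently a compact set $\mathcal{C}\subseteq P_r(F)$ is carried to a compact subset of $P_r(Y)$, which is uniformly tight in $Y$ by hypothesis; intersecting the compact tightness witnesses with $F$ gives uniform tightness of $\mathcal{C}$ in $F$. Thus $F$ is Prohorov.

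The main obstacle is therefore not conceptual but bookkeeping: verifying that $P_r(F)\hookrightarrow P_r(Y)$ is a closed topological embedding. A secondary point is the passage from separable metrizable to arbitrary metrizable $X$: since every Radon probability measure on a metric space is concentrated on a separable (indeed $\sigma$-compact) set, a closed copy of $\mathbb{Q}$ inside $P_r(X)$ would consist of countably many measures all living on a single separable closed set $S\subseteq X$, hence would already be a closed copy of $\mathbb{Q}$ inside $P_r(S)$; since $S$ is closed in $X$ it is Prohorov, so the separable argument applies to $S$ and yields the same contradiction. This reduction presupposes that the nonexistence of closed copies of $\mathbb{Q}$ still characterizes hereditary Baireness in the possibly non-metrizable space $P_r(X)$, which is the one place where extra care is needed.
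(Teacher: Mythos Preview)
Your approach is essentially identical to the paper's: cite that $P_r(X)$ is Prohorov when $X$ is (W\'ojcicka/Bogachev), use that the Prohorov property passes to closed subspaces, and conclude via Preiss's theorem and Hurewicz's characterization that no closed copy of $\mathbb{Q}$ can sit inside $P_r(X)$. The paper simply asserts the closed-hereditary permanence of the Prohorov property as known, whereas you sketch a proof of it; and the paper disposes of the non-separable case in one line by asserting that $P_r(X)$ is metrizable whenever $X$ is (so the first-countable regular version of Hurewicz applies), whereas you reduce to the separable case by hand---but these are differences of detail, not of strategy.
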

\begin{proof}
Since $X$ is metrizable the space $P_r(X)$ is metrizable too.
By \cite[Theorem 1]{W1} we infer that $P_r(X)$ is Prohorov. The Prohorov property is inherited by closed subspaces and since $\mathbb{Q}$ is not Prohorov \cite{P}
the result follows from Theorem
\ref{thrm_Hurewicz}.
\end{proof}

Every completely metrizable space is Prohorov (Prohorov's theorem)
and one of the intriguing open questions is whether there is a ZFC example of a universally measurable $X\subseteq [0,1]$
which is Prohorov but is not completely metrizable (see \cite[page 225]{Bo}.
In light of Corollary \ref{maincor2} it would be natural to check whether the complement of a Menger non-$\sigma$-compact subset of $[0,1]$ is Prohorov.
It is worth mentioning that Tsaban and Zdomskyy \cite[Section 2.5]{TZ} identified the whole class of Menger non-$\sigma$-compact subsets of the Cantor set
$2^\mathbb{N}$.

No topological description of Prohorov spaces is known. However, since the Prohorov property is invariant under images and preimages by perfect maps, see \cite{Top}
one can try to find a description in terms of the remainder of a compactification of $X$.
Let us take a closer look at this approach.

For a topological space $X$, let us denote by $\mathcal{O}$ (resp. $\mathcal{O}_k$, $\mathcal{O}^\ast$) the family of all open covers
(respectively, all open $k$-covers\footnote{A family $\mathcal{U}$ of subsets of $X$ is called a \textit{$k$-cover} of $X$ provided for each compact $K\subseteq X$,
there is an element $U\in \mathcal{U}$ with $K\subseteq U$.}, all open covers closed under finite unions).
Let $X$ be a metric space and let $\mathcal{C}$ be a collection of open covers of $X$. Let us denote by $G_1^X(\mathcal{C},\mathcal{O})$ the following game played
by two players: Player one picks open covers $\mathcal{U}_0, \mathcal{U}_1,\ldots$ from $\mathcal{C}$ and player II chooses sets $U_0,U_1,\ldots$ where
$U_n\in \mathcal{U}_n$, for every $n$. We declare that player II wins the run of the game if the family $\{U_n:n=0,1,\ldots\}$ belongs to $\mathcal{O}$, i.e.
covers $X$. Otherwise, player I wins.

It is not difficult to see that the Menger game $M(X)$ described in paragraph 1.2 is equivalent to the game $G_1^X(\mathcal{O}^\ast,\mathcal{O})$. Thus, if
$X\subseteq Z$, where $Z$ is a metric space,
player I has no winning strategy in $G_1^{Z\setminus X}(\mathcal{O}^\ast,\mathcal{O})$ if and only if $K(X)$ is hereditarily Baire. Since
$\mathcal{O}^\ast\subseteq \mathcal{O}_k$, if player I has no winning strategy in the game $G_1^Y(\mathcal{O}_k,\mathcal{O})$, she has no winning strategy in
the game $G_1^Y(\mathcal{O}^\ast,\mathcal{O})$, for any topological space $Y$.

Let $X$ be a separable metric space and let $bX$ be a metric compactification of $X$.
It was shown recently by F.\ Jordan in \cite{J} that player I has no winning strategy in the game $G_1^{bX\setminus X}(\mathcal{O}_k,\mathcal{O})$
if and only if $X$
is consonant (see \cite[Section 3]{HP} for more information on consonant spaces). Therefore, it follows from \cite[Proposition 4]{Bou}, that if player I has no
winning strategy in $G_1^{bX\setminus X}(\mathcal{O}_k,\mathcal{O})$, then the hyperspace $K(X)$ is Prohorov (in particular $X$ is Prohorov). As
we noted above, if player I has no winning strategy in $G_1^{bX\setminus X}(\mathcal{O}_k,\mathcal{O})$, then player I has no winning strategy in
$G_1^{bX\setminus X}(\mathcal{O}^\ast,\mathcal{O})$ which means that $K(X)$ is hereditarily Baire. It is natural to ask the following:
\begin{question}
Suppose that a metric space $X$ is Prohorov. Is it true that $K(X)$ is hereditarily Baire? 
\end{question}

\bibliography{games1a}
\bibliographystyle{siam}

\end{document}